\documentclass{amsart}
\usepackage{amsthm,amssymb,amsmath,thmtools}
\usepackage{enumitem}
\usepackage{hyperref}
\newtheorem{theorem}{Theorem}[section]
\newtheorem{lemma}[theorem]{Lemma}

\newtheorem{corollary}{Corollary}[section]

\theoremstyle{definition}

\theoremstyle{remark}

\numberwithin{equation}{section}

\hypersetup{
colorlinks=true,
linkcolor=black
}
\begin{document}
\title[fractional fast diffusion equations]{On the finite element approximation for fractional fast diffusion equations}
\author[D. Li]{Dongxue Li}
\address{\noindent School of Mathematics, Tianjin University, Tianjin 300072, P. R. China}
\email{lidongxue@tju.edu.cn}
\author[Y. Zheng]{Youquan Zheng}
\address{\noindent School of Mathematics, Tianjin University, Tianjin 300072, P. R. China}
\email{zhengyq@tju.edu.cn}

\begin{abstract}
Considering fractional fast diffusion equations on bounded open polyhedral domains in $\mathbb{R}^N$, we give a fully Galerkin approximation of the solutions by $C^0$-piecewise linear finite elements in space and backward Euler discretization in time, a priori estimates and the rates of convergence for the approximate solutions are proved, which extends the results of \emph{Carsten Ebmeyer and Wen Bin Liu, SIAM J. Numer. Anal., 46(2008), pp. 2393--2410}. We also generalize the a priori estimates and the rates of convergence to a parabolic integral equation under the framework of \emph{Qiang Du, Max Gunzburger, Richaed B. Lehoucq and Kun Zhou,
SIAM Rev., 54 (2012), no. 4, pp. 667--696.}
\end{abstract}
\maketitle
\section{Introduction}
Suppose $\Omega\subset\mathbb{R}^N$ is a bounded open polyhedral domain and $T \in (0, +\infty)$, we consider the finite element approximation of the following nonlocal nonlinear diffusion problem,
\begin{equation}\label{E:main}
\begin{cases}
u_t = - (-\Delta)^s(|u|^{m-1}u) &\quad \text{ in }\,\, \Omega\times (0,T],\\
u(x, t) = 0  &\quad \text{ on }\,\,  (\mathbb{R}^N \setminus \Omega)\times (0,T],\\
u(x, 0) = u_0(x) &\quad \text{ in }\,\,  \Omega,
\end{cases}
\end{equation}
for a function $u(x, t):\mathbb{R}^N \times[0,T]\to \mathbb{R}$, here $u_0(x) = 0$ on $\mathbb{R}^N\setminus\Omega$ and $s\in (0, 1)$.

The fractional Laplacian appeared in (\ref{E:main}) is defined as follows
$$(-\Delta)^sg(x) = c_{N,s}\text{p.v.}\int_{\mathbb{R}^N}\frac{g(x) - g(z)}{|x - z|^{N+2s}}\, dz,$$
where $c_{N,s} > 0$ is a normalization constant, which can be viewed as an infinitesimal generator of the stable and radially symmetric L\'{e}vy processes \cite{Bertoin1996}. $(-\Delta)^s$ and its generalizations not only play important roles in probability, but also have been widely used to model nonlocal Dirichlet forms \cite{Applebaum2004}, phase transitions \cite{BatesChmaj1999}, nonlocal heat conduction \cite{Bobaru2010}, anomalous diffusions in physics \cite{MetzlerKlafter2000}, and among others in recent years. We refer the interested readers to the well written paper \cite{DuqiangSIAMReview} for further discussions on more applications as well as related mathematical work.

When $s = 1$, $u_t=\Delta(|u|^{m-1}u)$ is the classical fast diffusion equation for $0<m<1$ and the porous medium equation for $1<m<\infty$, which are degenerate or singular equations, many standard finite element techniques do not work well. The a priori error bounds are often suboptimal and the orders usually converge to zero when parameters approximate zero or infinity. The first error estimates of fully discrete schemes for porous medium equations were obtained in \cite{Rose2010}. Semi-discretizations in time for the porous medium and fast diffusion equations were discussed in \cite{EdenMichauxRakotoson}, \cite{Roux1989} and \cite{RouxMainge}. Fully discrete schemes for the fast diffusion equations were given in \cite{LeftonWei} and \cite{RullaWalkington}. In \cite{Ebmeyer-Liu-2008}, the authors used the $C^0$-piecewise linear finite elements in space and the backward Euler time discretization, a priori error estimates in quasi-norms were derived, furthermore, they estimated the rate of convergence. In \cite{NogHuang2017}, an adaptive moving mesh finite element method was studied for the porous medium equations.

In this paper, we consider the fractional case $s\in (0, 1)$, which also includes the fast diffusion equations $(0 < m < 1)$ and the porous medium equations $(1 < m < \infty)$, respectively. The case $m = 1$ has attracted much attention in recent years, we limit ourselves to cite only, for example, \cite{DuqiangSIAMReview}, \cite{DuGuZhou2013}, \cite{DuHuangLehoucq2014}, \cite{DuYang2016}, \cite{DuYangZhou2017}, \cite{DuZhou2011}. Reference \cite{DuZhou2011} established a nonlocal functional analytical framework for a linear peridynamic model of a spring network system, Galerkin finite element approximation was applied for numerical approximation. In \cite{DuGuZhou2013}, the nonlocal analogs of first and second Green's identities, local and nonlocal balance laws and second-order elliptic boundary-value problems, to the classical calculus were established. By exploiting the nonlocal vector calculus given in \cite{DuGuZhou2013}, the authors in \cite{DuqiangSIAMReview} provided a variational analysis for nonlocal diffusion problems, which were described by a class of parabolic linear integral equation.
Finite dimensional approximations using continuous and discontinuous Galerkin methods, condition number and error estimates were also derived. In \cite{DuHuangLehoucq2014}, by introducing a nonlocal operator with the indicator function of a domain, the authors gave a description of initial and initial volume-constrained problems associated with a linear nonlocal convection-diffusion equation. Finite difference schemes as well as Monte Carlo simulations were applied to solve such nonlocal problems. In \cite{DuYang2016}, the Fourier spectral approximations of a nonlocal Allen-Cahn equation were investigated, the authors also provided various error estimates and studied the steady states of the models with different kernels via both numerical simulations and theoretical analysis. References \cite{DuYangZhou2017} studied a nonlocal-in-time parabolic equation, a semi-discrete finite element approximation was proposed and error estimates were obtained.

For the case $s\in (0, 1)$ and $m\neq 1$, in the well written papers \cite{deltesoJakobsen2019} and \cite{deltesoJakobsen2018}, the authors dealt with fully discrete numerical methods and provided in \cite{deltesoJakobsen2019} rigorous analysis of such numerical schemes which covers local and nonlocal, linear and nonlinear, non-degenerate and degenerate, and smooth and non-smooth problems. In \cite{deltesoJakobsen2018}, the authors gave concrete discretizations and verify numerically some important theoretical properties of the methods given in \cite{deltesoJakobsen2019}. Regularity of solutions for (\ref{E:main}) were studied, for example, in \cite{BonFigalliRoOton} and \cite{VPFR2017}.

In this paper we use the methods developed in \cite{Ebmeyer-Liu-2008} to study problem (\ref{E:main}) for $m\in (0, 1)$, in Section 2, we give a Galerkin approximation of the solutions for (\ref{E:main}) by $C^0$-piecewise linear finite elements in space and backward Euler discretization in time, in Section 3, we prove a priori estimates for the approximate solutions and in Section 4, the rates of convergence is showed, which extends the results of \cite{Ebmeyer-Liu-2008} to the fractional case (\ref{E:main}). In Section 5, we show that the results in Section 3 and Section 4 can be generalized to a more general problem then (\ref{E:main}) described by parabolic integral equations in the framework of \cite{DuqiangSIAMReview}.
\section{Discretization of problem (\ref{E:main})}\label{Section2}
To get a finite element approximation of problem (\ref{E:main}), we use the following assumptions:
\begin{enumerate}
\item[(H1)] $\Omega \subset \mathbb{R}^N$, $N \ge 1$, is a bounded, convex polyhedral domain.

\item[(H2)] $\partial\Omega = \bigcup_{1\leq k \leq M} \overline{\Gamma}_k$, each $\Gamma_k$ is a $(N - 1)$-dimensional polyhedron and $\Gamma_i\cap\Gamma_k=\phi$ for $i\neq k$.

\item[(H3)] $\partial\Gamma_{k1}\cap \dots \cap \partial\Gamma_{kj} = \phi$ if $j > N$ and $k_1 < \dots <k_j$.

\item[(H4)] $u_0 \in L^{\infty}(\Omega)$ and $u_0 = 0$ on $\mathbb{R}^N\setminus\Omega$.
\end{enumerate}
$u(x,t)$ is called a weak solution of \eqref{E:main} if
\begin{equation}
\label{Eq3.1}
\begin{aligned}
& - \int_0^T\int_{\mathbb{R}^N}u\varphi_t\, dx\, dt \\
&\qquad + \int_0^T\int_{\mathbb{R}^N}\int_{\mathbb{R}^N}\frac{(|u|^{m-1}u)(x) - (|u|^{m-1}u)(z)}{|x - z|^{N+2s}}(\varphi(x) - \varphi(z))\, dz\, dx\, dt\\
& = \int_{\mathbb{R}^N} u_0\varphi_0\, dx
\end{aligned}
\end{equation}
holds, where $\varphi(\cdot, T) \equiv 0$ on $\Omega$, $\varphi_0 = \varphi(\cdot, 0)$.
Indeed, multiplying \eqref{E:main} by $\varphi$ and integrating it over $\mathbb{R}^N$, then
\[\int_0^T\int_{\mathbb{R}^N}u_t\varphi\, dx\, dt = - \int_0^T\int_{\mathbb{R}^N}(-\Delta)^s(|u|^{m-1}u)\varphi\, dx\, dt.\]
First, integrate by parts, we have
\[\int_0^T\int_{\mathbb{R}^N}u_t\varphi\, dx\, dt = - \int_{\mathbb{R}^N} u_0\varphi_0\, dx - \int_0^T \int_{\mathbb{R}^N}u\varphi_t\, dx\, dt.\]
Second, there holds
$$
\begin{aligned}
I &:= \int_0^T\int_{\mathbb{R}^N}(-\Delta)^s(|u|^{m-1}u)\varphi(x)\, dx\, dt\\
  &= 2\int_0^T\int_{\mathbb{R}^N}\int_{\mathbb{R}^N}\frac{(|u|^{m-1}u)(x) -(|u|^{m-1}u)(z)}{|x - z|^{N+2s}}\, dz\varphi(x)\ dx\, dt\\
  &= 2\int_0^T\int_{\mathbb{R}^N}\int_{\mathbb{R}^N}\frac{(|u|^{m-1}u)(x) - (|u|^{m-1}u)(z)}{|x - z|^{N+2s}}\varphi(x)\, dz\, dx\, dt\\
  &= 2\int_0^T\int_{\mathbb{R}^N}\int_{\mathbb{R}^N}\frac{(|u|^{m-1}u)(x) - (|u|^{m-1}u)(z)}{|x - z|^{N+2s}}\big(\varphi(x) - \varphi(z)\big)\, dz\, dx\, dt\\
  &\quad + 2\int_0^T\int_{\mathbb{R}^N}\int_{\mathbb{R}^N}\frac{(|u|^{m-1}u)(x) - (|u|^{m-1}u)(z)}{|x - z|^{N+2s}}\, dz\varphi(z)\, dz\, dx\, dt.
\end{aligned}
$$
Since
$$
\begin{aligned}
& 2\int_0^T\int_{\mathbb{R}^N}\int_{\mathbb{R}^N}\frac{(|u|^{m-1}u)(x) - (|u|^{m-1}u)(z)}{|x - z|^{N+2s}}\, dz\varphi(z)\, dz\, dx\, dt\\
& = -2\int_0^T\int_{\mathbb{R}^N}\int_{\mathbb{R}^N}\frac{(|u|^{m-1}u)(z) - (|u|^{m-1}u)(x)}{|x - z|^{N+2s}}\varphi(z)\, dx\, dz\, dt\\
& = -I,
\end{aligned}
$$
we get
$$I = \int_0^T\int_{\mathbb{R}^N}\int_{\mathbb{R}^N}\frac{(|u|^{m-1}u)(x) - (|u|^{m-1}u)(z)}{|x - z|^{N+2s}}\big(\varphi(x) - \varphi(z)\big)\, dz\, dx\, dt.$$

\subsection{The discrete problem.}\label{Subsection2.2}
In the following, denote the time step as $\tau > 0$. For $n = 0, \cdots, M$, set $t_n = n\tau$, $T = t_M$, $I_n = (t_{n-1},t_n)$, $v_n(x) = v(x,t_n)$, $\bar{v}^0(x) = v_0(x)$ and
$$\bar{v}^n(x) = \tau^{-1}\int_{I_n}v(x,s)\, ds\text{ for } n \ge 1.$$
Let $T_h$ be a family of decompositions of $\Omega$ into closed $N-$simplices and $h$ is the mesh-size, moreover, assume $T_h$ is a regular triangulation, i.e., intersection of two non-disjoint nonidentical elements in $T_h$ is a common vertex or edge or surface, and there exists a constant $c > 0$ such that
$$|K| \ge c(\text{diam} \,\,\, K)^N \text{ for all simplices }K \in T_h.$$
Define
\begin{center}
$S_h(\Omega) =$ \{$\phi_h \in C^0(\overline{\Omega}):\phi_h$ is piecewise linear w.r.t. $T_h,\phi_h = 0$ on $\mathbb{R}^N \setminus \Omega$\}
\end{center}
and $\Pi_hv \in S_h$ denote the $C^0$-piecewise linear interpolation of $v$.
For $s \in (0,1)$, the standard fractional-order Sobolev space is defined as
$$H^s(\mathbb{R}^N) := \{u \in L^2(\mathbb{R}^N) : \Vert u\Vert_{L^2(\mathbb{R}^N)} + |u|_{H^s(\mathbb{R}^N)} < \infty\},$$
where $$|u|_{H^s(\mathbb{R}^N)}^2 := \int_{\mathbb{R}^N}\int_{\mathbb{R}^N}\frac{(u(y) - u(x))^2}{|y - x|^{N+2s}}\, dy\, dx.$$
Let $P_h : H^s(\mathbb{R}^N) \to S_h$ be the $H^s$-projection onto $S_h$ defined as
$$ \int_{\mathbb{R}^N}\int_{\mathbb{R}^N}\frac{(v - P_hv)(x) - (v - P_hv)(z)}{|x -z |^{N+2s}}\big(\chi(x) - \chi(z)\big)\, dz\, dx = 0 \quad \forall\chi \in S_h.$$
Let $w := |u|^{m-1}u$, then $\partial_tu = - (- \Delta)^sw$ and
\begin{equation}
\label{Eq4.1}
\partial_t\psi(w) = - (- \Delta)^s w,
\end{equation}
where $\psi(y) := |y|^{\frac{1-m}{m}}y$. Let $W_n \in S_h$, $n = 1,2,\ldots$ be the solutions of the following system
\begin{equation}
\label{Eq4.2}
\begin{aligned}
& \left(\frac{\psi(W_n) - \psi(W_{n-1})}{\tau},\chi_n \right)\\
& \quad + \int_{\mathbb{R}^N}\int_{\mathbb{R}^N}\frac{W_n(x) - W_n(z)}{|x - z|^{N+2s}}\big(\chi_n(x) - \chi_n(z)\big)\, dz\, dx = 0 \quad \forall\chi_n \in S_h,
\end{aligned}
\end{equation}
where $W_0 := \psi^{-1}(\Pi_h\psi(w_0))$. The finite element approximation $W(x,t)$ of $w(x,t)$ is defined as follows
\begin{equation}
\label{Eq4.3}
W(x,t) =
\begin{cases}
W_0(x) &\quad \mathrm{if} \,\, t = 0,\\
W_n(x,t)&\quad \mathrm{if} \,\, t \in (t_{n-1},t_n],\,\, 1 \leq n \leq M.
\end{cases}
\end{equation}
Then we have
\begin{lemma}
There exist unique functions $W_1,\ldots,W_M \in S_h$ solving \eqref{Eq4.2}.
\end{lemma}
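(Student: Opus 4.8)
The plan is to proceed by induction on $n$, constructing $W_n$ from $W_{n-1}$. Since $W_0 = \psi^{-1}(\Pi_h\psi(w_0)) \in S_h$ is given, it suffices to show that, for each fixed $W_{n-1} \in S_h$, there is a unique $W_n \in S_h$ satisfying \eqref{Eq4.2}. Writing $a(u,\chi) := \int_{\mathbb{R}^N}\int_{\mathbb{R}^N}\frac{(u(x)-u(z))(\chi(x)-\chi(z))}{|x-z|^{N+2s}}\,dz\,dx$ for the symmetric bilinear form attached to the $H^s$-seminorm, the central observation is that \eqref{Eq4.2} is exactly the Euler--Lagrange equation of the functional
$$J(v) := \frac{1}{\tau}\int_\Omega \Psi(v)\,dx + \frac{1}{2}a(v,v) - \frac{1}{\tau}\int_\Omega \psi(W_{n-1})\,v\,dx$$
on the finite-dimensional space $S_h$, where $\Psi$ is the primitive of $\psi$ with $\Psi(0)=0$.

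First I would record the explicit shape of $\psi$: since $\tfrac{1-m}{m}+1 = \tfrac{1}{m}$, one has $\psi(y) = |y|^{(1-m)/m}y = \operatorname{sign}(y)\,|y|^{1/m}$, whence $\Psi(y) = \tfrac{m}{1+m}|y|^{(1+m)/m}$. For $0<m<1$ the exponent $(1+m)/m = 1+1/m$ exceeds $2$, $\psi$ is a strictly increasing continuous bijection of $\mathbb{R}$, and $\Psi$ is strictly convex with superquadratic growth. Two properties then drive the argument. \emph{Strict convexity}: $v\mapsto\int_\Omega\Psi(v)\,dx$ is strictly convex on $S_h$ because $\Psi$ is strictly convex and distinct elements of $S_h$ differ on a set of positive measure, while $v\mapsto\tfrac12 a(v,v)=\tfrac12|v|_{H^s}^2$ is convex and the remaining term is linear; hence $J$ is strictly convex. \emph{Coercivity}: one has the lower bound
$$J(v) \ge \frac{1}{\tau}\frac{m}{1+m}\,\|v\|_{L^{(1+m)/m}(\Omega)}^{(1+m)/m} - \frac{1}{\tau}\|\psi(W_{n-1})\|_{L^2}\,\|v\|_{L^2},$$
in which the superlinear first term dominates the linear perturbation; since $S_h$ is finite-dimensional all norms are equivalent, so $J(v)\to+\infty$ as $\|v\|\to\infty$.

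Existence then follows from the direct method: $J$ is continuous and coercive on the finite-dimensional space $S_h$, so it attains a minimizer $W_n$, which satisfies the first-order optimality condition $\langle J'(W_n),\chi\rangle=0$ for all $\chi\in S_h$; unwinding the derivative (using that $\int_\Omega\psi(W_{n-1})\chi\,dx=(\psi(W_{n-1}),\chi)$ because $W_{n-1}\in S_h$ is bounded) gives precisely \eqref{Eq4.2}. Uniqueness is immediate from strict convexity, as a strictly convex functional has at most one critical point. The induction then yields $W_1,\dots,W_M$.

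I expect the main obstacle to be the bookkeeping around the nonlinearity $\psi$: checking that $\int_\Omega\Psi(v)\,dx$ and $(\psi(W_{n-1}),\chi)$ are finite and that the G\^ateaux derivative of the $\Psi$-term is indeed $\chi\mapsto(\psi(v),\chi)$. These are harmless once one notes that piecewise-linear functions are bounded on the bounded domain $\Omega$, so all relevant $L^p$-integrals converge and differentiation under the integral sign is justified by dominated convergence. As an alternative route I would mention that the operator $A\colon S_h\to S_h^\ast$ given by $\langle A(v),\chi\rangle = \tfrac{1}{\tau}(\psi(v),\chi)+a(v,\chi)$ is continuous, strictly monotone (because $\psi$ is strictly increasing and $a$ is nonnegative), and coercive, so the finite-dimensional Browder--Minty argument delivers a unique solution directly.
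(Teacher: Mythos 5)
Your proof is correct, and it supplies exactly what the paper leaves out: the paper proves this lemma only by reference to Lemma 3.1 of Ebmeyer--Liu, and both your primary route (minimizing the strictly convex, coercive functional $J$ whose Euler--Lagrange equation is \eqref{Eq4.2}) and your alternative route (finite-dimensional Browder--Minty for the strictly monotone, coercive operator $A$) are the standard arguments for such implicit Euler steps; the two are really one argument, since $A$ is precisely the derivative of $J$ shifted by the data term. The details you check (explicit primitive $\Psi(y)=\tfrac{m}{1+m}|y|^{(1+m)/m}$, strict convexity from strict monotonicity of $\psi$, coercivity from superlinear growth beating the linear term, boundedness of piecewise linear functions justifying differentiation under the integral) are all accurate.

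Two small remarks. First, $W_0=\psi^{-1}(\Pi_h\psi(w_0))$ does \emph{not} in general belong to $S_h$, since $\psi^{-1}$ does not map piecewise linear functions to piecewise linear functions; so the base case of your induction should not assert $W_0\in S_h$. This is immaterial to the argument: only $\psi(W_{n-1})$ enters \eqref{Eq4.2}, and at the first step $\psi(W_0)=\Pi_h\psi(w_0)\in S_h\subset L^\infty(\Omega)$, so the linear term $\tfrac{1}{\tau}(\psi(W_0),\chi)$ is well defined and your convexity/coercivity argument runs verbatim for every $n\ge 1$, with $\psi(W_{n-1})$ bounded because $W_{n-1}\in S_h$ for $n\ge 2$. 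Second, you invoke $0<m<1$ to get a superquadratic exponent $(1+m)/m>2$, but your coercivity bound only needs the exponent to exceed $1$, which holds for every $m>0$; so your proof in fact covers the lemma in the generality the paper uses it (the a priori estimates later are stated for all $m>0$).
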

The proof of is similar to that of \cite[Lamme 3.1]{Ebmeyer-Liu-2008}, so we omit it here .
\begin{lemma}
There exists a positive constant $c$ such that
$$
\begin{aligned}
& \displaystyle\sup_{1\leq n\leq M}\Vert W_n\Vert_{L^{\frac{m+1}{m}}(\mathbb{R}^N)}^{\frac{m+1}{m}} + \tau\displaystyle\sum_{n=1}^M\int_{\mathbb{R}^N}\int_{\mathbb{R}^N}\frac{(W_n(x) - W_n(z))^2}{|x - z|^{N+2s}}\, dz\, dx \leq c.
\end{aligned}
$$
\end{lemma}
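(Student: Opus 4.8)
The plan is to derive the bound by a discrete energy argument, testing the scheme \eqref{Eq4.2} with the natural choice $\chi_n = W_n$ and then exploiting the convexity hidden in the nonlinearity $\psi$. First I would insert $\chi_n = W_n$ into \eqref{Eq4.2} (a legitimate test function since $W_n \in S_h$) and multiply by $\tau$, which turns the bilinear form into the square of the fractional seminorm and yields
$$\bigl(\psi(W_n) - \psi(W_{n-1}), W_n\bigr) + \tau\int_{\mathbb{R}^N}\int_{\mathbb{R}^N}\frac{(W_n(x) - W_n(z))^2}{|x - z|^{N+2s}}\, dz\, dx = 0.$$
Since the second term is nonnegative, the whole difficulty is transferred to extracting a telescoping, dissipative structure from the first term.

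Second, I would establish the pointwise inequality that drives the estimate. Setting $G(p) := \tfrac{1}{m+1}\abs{p}^{m+1}$, one computes $G'(p) = \abs{p}^{m-1}p = \psi^{-1}(p)$, and $G$ is convex because $m + 1 > 1$. Reading the subgradient inequality $G(p) - G(q) \le G'(p)(p - q)$ with $p = \psi(a)$ and $q = \psi(b)$, and introducing $\Phi(y) := G(\psi(y)) = \tfrac{1}{m+1}\abs{y}^{\frac{m+1}{m}}$ (using $\abs{\psi(y)} = \abs{y}^{1/m}$), gives
$$\bigl(\psi(a) - \psi(b)\bigr)\,a \ge \Phi(a) - \Phi(b) \qquad \text{for all } a, b \in \mathbb{R}.$$
Integrating in $x$ over $\mathbb{R}^N$ with $a = W_n(x)$, $b = W_{n-1}(x)$ bounds the first term from below by $\tfrac{1}{m+1}\bigl(\Vert W_n\Vert_{L^{\frac{m+1}{m}}}^{\frac{m+1}{m}} - \Vert W_{n-1}\Vert_{L^{\frac{m+1}{m}}}^{\frac{m+1}{m}}\bigr)$.

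Third, combining the two previous steps gives, for each $n$, the dissipation inequality in which the first difference telescopes under summation; summing over $n = 1, \dots, k$ leaves
$$\tfrac{1}{m+1}\Vert W_k\Vert_{L^{\frac{m+1}{m}}}^{\frac{m+1}{m}} + \tau\sum_{n=1}^k \int_{\mathbb{R}^N}\int_{\mathbb{R}^N}\frac{(W_n(x) - W_n(z))^2}{|x - z|^{N+2s}}\, dz\, dx \le \tfrac{1}{m+1}\Vert W_0\Vert_{L^{\frac{m+1}{m}}}^{\frac{m+1}{m}}.$$
It then remains only to control the right-hand side uniformly in $h$ and $\tau$. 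Since $\psi(w_0) = u_0$ and $W_0 = \psi^{-1}(\Pi_h\psi(w_0))$, one has $\abs{W_0} = \abs{\Pi_h u_0}^m$, hence $\Vert W_0\Vert_{L^{\frac{m+1}{m}}}^{\frac{m+1}{m}} = \Vert \Pi_h u_0\Vert_{L^{m+1}}^{m+1}$, which by the $L^\infty$-stability of the nodal interpolation, hypothesis (H4), and the boundedness of $\Omega$ is $\le c\Vert u_0\Vert_{L^\infty(\Omega)}^{m+1}\abs{\Omega}$. Taking the supremum over $k$ in the first term and $k = M$ in the sum, then adding the two bounds, yields the claim.

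I expect the main obstacle to be the second step: identifying the correct convex potential $\Phi$ and verifying the convexity inequality for the genuinely nonlinear $\psi$ across the full range $0 < m < 1$. The remaining telescoping and the interpolation bound for $W_0$ are routine, though the latter tacitly requires the nodal interpolant $\Pi_h u_0$ to be well defined and $L^\infty$-stable under the regularity available from (H4).
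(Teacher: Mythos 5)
Your proposal is correct and is essentially the paper's own proof: both test \eqref{Eq4.2} with $\chi_n = W_n$, control the cross term $(\psi(W_{n-1}),W_n)$, and telescope to the bound $\tfrac{1}{m+1}\Vert W_0\Vert_{L^{\frac{m+1}{m}}}^{\frac{m+1}{m}}$; your convex-potential inequality $(\psi(a)-\psi(b))a \ge \Phi(a)-\Phi(b)$ is pointwise equivalent to the Young's inequality $|\psi(W_{n-1})W_n| \le \tfrac{1}{m+1}|W_{n-1}|^{\frac{m+1}{m}} + \tfrac{m}{m+1}|W_n|^{\frac{m+1}{m}}$ that the paper uses. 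Your version is in fact slightly tidier in two respects the paper glosses over: summing to an arbitrary index $k$ so that the supremum over $n$ is genuinely obtained, and spelling out the bound $\Vert W_0\Vert_{L^{\frac{m+1}{m}}}^{\frac{m+1}{m}} = \Vert \Pi_h u_0\Vert_{L^{m+1}}^{m+1} \le c$ that the paper dismisses with ``Clearly''.
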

\begin{proof}
Choose $\chi_n = W_n$ in \eqref{Eq4.2} and take summation, we obtain
$$
\begin{aligned}
&\displaystyle\sum_{n=1}^M(\psi(W_n),W_n) + \tau\displaystyle\sum_{n=1}^M\int_{\mathbb{R}^N}\int_{\mathbb{R}^N}\frac{(W_n(x) - W_n(z))^2}{|x - z|^{N+2s}}\, dy\, dx = \displaystyle\sum_{n=1}^M(\psi(W_{n-1}),W_n).
\end{aligned}
$$
From Young's inequality, $|\psi(W_{n-1})W_n| \leq \frac{1}{m+1}|W_{n-1}|^{\frac{m+1}{m}} + \frac{m}{m+1}| W_n|^{\frac{m+1}{m}}$. Since $\psi(W_n)W_n = |W_n|^{\frac{m+1}{m}}$,
$$
\begin{aligned}
&\frac{1}{m+1}\Vert W_M\Vert_{\frac{m+1}{m}}^{\frac{m+1}{m}} + \tau\displaystyle\sum_{n=1}^M\int_{\mathbb{R}^N}\int_{\mathbb{R}^N}\frac{(W_n(x) - W_n(z))^2}{|x - z|^{N+2s}}\, dy\, dx \leq \frac{1}{m+1}\Vert W_0\Vert_{\frac{m+1}{m}}^{\frac{m+1}{m}}.
\end{aligned}
$$
Clearly, $\Vert W_0\Vert_{\frac{m+1}{m}} \leq c$ for some positive constant $c$.
\end{proof}
\subsection{The quasi-norm.}\label{section4}
For $v_1,v_2 \in L^p(\Omega)$ and $p > 1$, define the quasi-norm as
$$\Vert v_2\Vert_{(v_1,p)}^2 := \int_{\Omega}(|v_1| + |v_2|)^{p-2}|v_2|^2.$$
From \cite{Ebmeyer-Liu-2008}, we know that
$$c_1\Vert v_2\Vert_{L^p(\Omega)}^p \leq \Vert v_2\Vert_{(v_1,p)} \leq \Vert v_2\Vert_{L^p(\Omega)}^{p/2}\qquad \text{for}\,\,\, 1 < p < 2$$
and
$$\Vert v_2\Vert_{L^p(\Omega)}^{p/2} \leq \Vert v_2\Vert_{(v_1,p)} \leq c_2\Vert v_2\Vert_{L^p(\Omega)}^p\qquad \text{for}\,\,\, 2 < p < \infty,$$
where $c_1,c_2 > 0$ are constants depending on $\Vert v_1\Vert_{L^p(\Omega)}$ and $\Vert v_2\Vert_{L^p(\Omega)}$. Also,
\begin{equation}
\label{Eq5.1}
\begin{aligned}
& \int_0^T\Vert w - v\Vert_{(w,\frac{m+1}{m})}^2 \cong \int_0^T(\psi(w) - \psi(v),w - v)\\
& \cong \int_0^T\Vert \psi(w) - \psi(v)\Vert_{(\psi(w),m+1)}^2 = \int_0^T\Vert u - \psi(v)\Vert_{(u,m+1)}^2
\end{aligned}
\end{equation}
for all $v \in L^{\frac{m+1}{m}}([0,T] \times \Omega)$, where $u = \psi(w)$.

Now consider the following time independent problem
$$\psi(v) +(-\Delta)^sv = f \quad \mathrm{on}\,\, \Omega, \qquad v = 0 \quad\mathrm{on}\,\, \mathbb{R}^N\setminus\Omega$$
for a smooth function $f$. It is easy to see that there exists a unique weak solution satisfying
\begin{equation}
\label{Eq5.2}
(\psi(v),\varphi) + \int_{\mathbb{R}^N}\int_{\mathbb{R}^N}\frac{v(x) - v(z)}{|x - z|^{N+2s}}(\varphi(x) - \varphi(z))\, dz\, dx = (f,\varphi)
\end{equation}
for $\forall\varphi \in H_0^s(\Omega) := \{u \in H^s(\mathbb{R}^N) : u \equiv 0 \quad \text{on} \,\,\, \mathbb{R}^N\setminus\Omega\}$.
Let $V \in S_h$ be the finite element approximation satisfying
\begin{equation}
\label{Eq5.3}
(\psi(V),\chi) + \int_{\mathbb{R}^N}\int_{\mathbb{R}^N}\frac{V(x) - V(z)}{|x - z|^{N+2s}}(\chi(x) - \chi(z))\, dz\, dx = (f,\chi)\text{ for }\forall\chi \in S_h.
\end{equation}
Then there exists a constant $c > 0$ independent of $h$ satisfying
\begin{equation}
\label{Eq5.4}
\begin{aligned}
& \Vert v - V\Vert_{(v,\frac{m+1}{m})}^2 + \int_{\mathbb{R}^N}\int_{\mathbb{R}^N}\frac{(v(x) - v(z) - (V(x) - V(z)))^2}{|x - z|^{N+2s}}\, dz\, dx\\
& \leq c\displaystyle\inf_{v_h \in S_h}\Big(\Vert v - v_h\Vert_{(v,\frac{m+1}{m})}^2 + \int_{\mathbb{R}^N}\int_{\mathbb{R}^N}\frac{(v(x) - v(z) - (v_h(x) - v_h(z)))^2}{|x - z|^{N+2s}}\, dz\, dx\Big).
\end{aligned}
\end{equation}
Indeed, choose $\varphi = \chi$ in \eqref{Eq5.2} and take difference between \eqref{Eq5.2} and \eqref{Eq5.3}, we obtain
\begin{equation}
\label{Eq5.5}
\begin{aligned}
& (\psi(v) - \psi(V),\chi) + \int_{\mathbb{R}^N}\int_{\mathbb{R}^N}\frac{v(x) - v(z) - (V(x) - V(z))}{|x - z|^{N+2s}}(\chi(x) - \chi(z))\, dz\, dx = 0.
\end{aligned}
\end{equation}
For $v_h \in S_h$, from \eqref{Eq5.5} we have
$$
\begin{aligned}
& (\psi(v) - \psi(V),v - V) + \int_{\mathbb{R}^N}\int_{\mathbb{R}^N}\frac{(v(x) - v(z) - (V(x) - V(z)))^2}{|x - z|^{N+2s}}\, dz\, dx\\
& = (\psi(v) - \psi(V),v - v_h) + \int_{\mathbb{R}^N}\int_{\mathbb{R}^N}\frac{v(x) - v(z) - (V(x) - V(z))}{|x - z|^{N+2s}}\\
& \qquad \times (v(x) - v(z) - (v_h(x) - v_h(z)))\, dz\, dx.
\end{aligned}
$$
From \cite[Lemma 4.4]{Ebmeyer-Liu-2008},
$$
\begin{aligned}
(\psi(v) - \psi(V),v - v_h) \leq \delta(\psi(v) - \psi(V),v - V) + c_{\delta}(\psi(v) - \psi(v_h),v - v_h).
\end{aligned}
$$
Then Young's inequality implies that
$$
\begin{aligned}
& \int_{\mathbb{R}^N}\int_{\mathbb{R}^N}\frac{v(x) - v(z) - (V(x) - V(z))}{|x - z|^{N+2s}}(v(x) - v(z) - (v_h(x) - v_h(z)))\, dz\, dx\\
& \leq \delta\int_{\mathbb{R}^N}\int_{\mathbb{R}^N}\frac{(v(x) - v(z) - (V(x) - V(z)))^2}{|x - z|^{N+2s}}\, dz\, dx \\
& \qquad + c_{\delta}\int_{\mathbb{R}^N}\int_{\mathbb{R}^N}\frac{(v(x) - v(z) - (v_h(x) - v_h(z)))^2}{|x - z|^{N+2s}}\, dz\, dx.
\end{aligned}
$$
Hence \eqref{Eq5.4} is valid.

\section{A priori error estimates.}\label{section5}
In this section we prove an a priori error estimate for $w - W$.
\begin{theorem}\label{thm:main5.1}
For any $m > 0$ there exists a constant $c > 0$ independent of $h$ and $\tau$ such that
$$
\begin{aligned}
& \int_0^T\Vert w - W\Vert_{(w,\frac{m+1}{m})}^2 + \int_{\mathbb{R}^N}\int_{\mathbb{R}^N}\frac{|(\bar{w}(x) - \bar{w}(z)) - (\overline{W}(x) - \overline{W}(z))|^2}{|x - z|^{N+2s}}\, dz\, dx\\
& \leq c\Bigg(\displaystyle\sum_{n=1}^M\int_{I_n}\Vert w_n - w\Vert_{(w,\frac{m+1}{m})}^2 + \int_0^T\Vert w - P_hw\Vert_{(w,\frac{m+1}{m})}^2 + \Vert \psi(w_0) - \Pi_h\psi(w_0)\Vert_2^2\\
&\quad + \int_{\mathbb{R}^N}\int_{\mathbb{R}^N}\frac{\left|\tau\displaystyle\sum_{n=1}^M\left[(\bar{w}^n(x) - \bar{w}^n(z)) - (P_h\bar{w}^n(x) - P_h\bar{w}^n(z))\right]\right|^2}{|x - z|^{N+2s}}\, dz\, dx  \Bigg),
\end{aligned}
$$
where $\bar{w}(x) = \int_0^Tw(x,t)\, dt$ and $\overline{W}(x) = \int_0^TW(x,t)\, dt$.
\end{theorem}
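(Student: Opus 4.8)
The plan is to transcribe the energy argument of \cite{Ebmeyer-Liu-2008} to the nonlocal setting, with the $H^s$-bilinear form in place of the Dirichlet form and the $H^s$-projection $P_h$ in place of the $H^1$-projection. Write $a(v,\chi) := \int_{\mathbb{R}^N}\int_{\mathbb{R}^N}\frac{(v(x)-v(z))(\chi(x)-\chi(z))}{|x-z|^{N+2s}}\,dz\,dx$ for the double-integral bilinear form, so that \eqref{Eq4.2} reads $(\tau^{-1}(\psi(W_n)-\psi(W_{n-1})),\chi)+a(W_n,\chi)=0$. First I would integrate the weak form of \eqref{Eq4.1} over $I_n$; since $\bar{w}^n=\tau^{-1}\int_{I_n}w$ and $a$ is linear in its first slot, this gives $(\psi(w_n)-\psi(w_{n-1}),\chi)+\tau\,a(\bar{w}^n,\chi)=0$ for all $\chi\in S_h$. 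Subtracting $\tau$ times \eqref{Eq4.2} produces the error identity $\big((\psi(w_n)-\psi(W_n))-(\psi(w_{n-1})-\psi(W_{n-1})),\chi\big)+\tau\,a(\bar{w}^n-W_n,\chi)=0$. Writing $\bar{w}^n-W_n=(\bar{w}^n-P_h\bar{w}^n)+\theta_n$ with $\theta_n:=P_h\bar{w}^n-W_n\in S_h$, the defining property of $P_h$ makes the first summand $a$-orthogonal to $\theta_n$, so the choice $\chi=\theta_n$ collapses the bilinear contribution to the coercive term $a(\theta_n,\theta_n)=|\theta_n|_{H^s}^2$. Abbreviating $G_n:=\psi(w_n)-\psi(W_n)$, I reach $(G_n-G_{n-1},\theta_n)+\tau\,|\theta_n|_{H^s}^2=0$.

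Next I would split the test function as $\theta_n=(w_n-W_n)+(P_h\bar{w}^n-w_n)$. The diagonal pairing $(G_n,w_n-W_n)=(\psi(w_n)-\psi(W_n),w_n-W_n)$ is exactly the quantity that, by \eqref{Eq5.1}, is equivalent to $\|w_n-W_n\|_{(w_n,\frac{m+1}{m})}^2$, and is the coercive term I want to retain on the left. The cross pairing $(G_n-G_{n-1},P_h\bar{w}^n-w_n)$ is a consistency term: decomposing $P_h\bar{w}^n-w_n=(P_h\bar{w}^n-\bar{w}^n)+(\bar{w}^n-w_n)$ isolates the spatial projection error from the time-averaging error, to be controlled by $\int_0^T\|w-P_hw\|_{(w,\frac{m+1}{m})}^2$ and by $\sum_n\int_{I_n}\|w_n-w\|_{(w,\frac{m+1}{m})}^2$ respectively. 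Here \cite[Lemma 4.4]{Ebmeyer-Liu-2008} is the crucial tool: it lets me dominate such cross terms by $\delta$ times the coercive quasi-norm plus $c_\delta$ times a pure projection error, after which Young's inequality keeps only a small multiple of the coercive quantities on the left.

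The main obstacle is the temporal summation. Summing over $n=1,\dots,M'$ for each $M'\le M$, I must bound the accumulated diagonal sum $\sum_{n\le M'}(G_n-G_{n-1},w_n-W_n)$ from below. This is the discrete analogue of $\int(\partial_t(\psi(w)-\psi(W)))(w-W)$ for the \emph{difference} of two solutions of a singular equation, and it does not telescope by a naive pointwise convexity inequality, since the natural Bregman functional $\Phi^*(\psi(w))-\Phi^*(\psi(W))-W(\psi(w)-\psi(W))$ (with $\Phi^*(z)\cong|z|^{m+1}$) is convex in its first argument only, not jointly in $(\psi(w_n),\psi(W_n))$. I expect to resolve this exactly as in \cite{Ebmeyer-Liu-2008}, through the discrete monotonicity estimate underlying \cite[Lemma 4.4]{Ebmeyer-Liu-2008}, yielding a nonnegative terminal energy bounded below by $c\,\|w_{M'}-W_{M'}\|_{(w_{M'},\frac{m+1}{m})}^2$ plus a boundary contribution at $n=0$. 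Since $W_0=\psi^{-1}(\Pi_h\psi(w_0))$ forces $G_0=\psi(w_0)-\Pi_h\psi(w_0)$, that boundary term is precisely $\|\psi(w_0)-\Pi_h\psi(w_0)\|_2^2$, the third term on the right.

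Finally I would assemble the two norms appearing on the left. Choosing $\delta$ small and taking the supremum over $M'\le M$ bounds $\sup_n\|w_n-W_n\|_{(w_n,\frac{m+1}{m})}^2+\tau\sum_n|\theta_n|_{H^s}^2$ by the right-hand side. For the first term of the theorem I estimate $\int_0^T\|w-W\|_{(w,\frac{m+1}{m})}^2\lesssim\sum_n\int_{I_n}\|w-w_n\|_{(w,\frac{m+1}{m})}^2+\tau\sum_n\|w_n-W_n\|_{(w_n,\frac{m+1}{m})}^2\le\sum_n\int_{I_n}\|w-w_n\|_{(w,\frac{m+1}{m})}^2+T\sup_n\|w_n-W_n\|_{(w_n,\frac{m+1}{m})}^2$, the constant being allowed to depend on the fixed $T$. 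For the second term I use $\bar{w}-\overline{W}=\tau\sum_n(\bar{w}^n-P_h\bar{w}^n)+\tau\sum_n\theta_n$, so the triangle inequality for $|\cdot|_{H^s}$ extracts the last right-hand term directly from $\tau\sum_n(\bar{w}^n-P_h\bar{w}^n)$, while triangle inequality followed by Cauchy--Schwarz gives $|\tau\sum_n\theta_n|_{H^s}^2\le T\,\tau\sum_n|\theta_n|_{H^s}^2$. Combining these bounds with the energy estimate completes the proof.
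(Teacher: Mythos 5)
Your proposal tracks the paper up to the error identity $(G_n-G_{n-1},\chi)+\tau\,a(\bar w^n-W_n,\chi)=0$, but it diverges at the decisive point --- the choice of test function --- and the divergence opens a gap that your own text flags but does not close. Testing with $\theta_n:=P_h\bar w^n-W_n$ forces you to bound the accumulated pairing $\sum_{n\le M'}\bigl(G_n-G_{n-1},\,w_n-W_n\bigr)$ from below by a terminal energy, and you propose to do this via ``the discrete monotonicity estimate underlying Lemma 4.4'' of \cite{Ebmeyer-Liu-2008}. That lemma cannot do this job: it is a Young-type inequality for the monotone map $\psi$, of the form $(\psi(a)-\psi(b),a-c)\le\delta(\psi(a)-\psi(b),a-b)+c_\delta(\psi(a)-\psi(c),a-c)$, i.e.\ a tool for splitting cross terms, not for telescoping a time sum; and, as you yourself observe, no Bregman-type functional telescopes this sum for the \emph{difference} of two solutions. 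Neither \cite{Ebmeyer-Liu-2008} nor this paper ever confronts that sum --- both avoid it entirely.

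The paper avoids it by taking the time-accumulated test function $\chi_n=\tau\sum_{k=n}^{M}(P_h\bar w^k-\Pi_hW_k)$. Then Abel summation, using $\chi_n-\chi_{n+1}=\tau(P_h\bar w^n-W_n)$, makes the diagonal terms $\tau\sum_{n}\bigl(\psi(w_n)-\psi(W_n),\,P_h\bar w^n-W_n\bigr)$ appear with a favorable sign for \emph{every} $n$ (these, after splitting off the cross term handled by Lemma 4.4, become $\int_0^T(\psi(w)-\psi(W),w-W)$ on the left), so no pairing of $G_n-G_{n-1}$ against $w_n-W_n$ ever arises. Meanwhile the bilinear part becomes $\tau^2\sum_n a\bigl(\theta_n,\sum_{k\ge n}\theta_k\bigr)$, which the identity $\sum_n a_n\sum_{k\ge n}a_k=\tfrac12\bigl(\sum_n a_n\bigr)^2+\tfrac12\sum_n a_n^2$ bounds below by $\tfrac12\bigl\lvert\tau\sum_n\theta_n\bigr\rvert_{H^s}^2$; this is exactly the seminorm of the accumulated error $\bar w-\overline W$ asserted in the theorem, whereas your route would produce $\tau\sum_n\lvert\theta_n\rvert_{H^s}^2$ together with a terminal quasi-norm --- more than the statement claims, and unattainable by this argument precisely because of the missing telescoping. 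Finally, the initial term is not ``precisely'' $\Vert\psi(w_0)-\Pi_h\psi(w_0)\Vert_2^2$: it arises as $\bigl(\psi(w_0)-\psi(W_0),\,\tau\sum_n(P_h\bar w^n-W_n)\bigr)$ and must be split by Young's inequality, with the second factor absorbed into the accumulated $H^s$ term via the nonlocal Poincar\'e inequality of \cite{DuqiangSIAMReview} --- an absorption that is again only available because the summed test function put $\bigl\lvert\tau\sum_n\theta_n\bigr\rvert_{H^s}^2$ on the left. To repair your proof, replace $\chi=\theta_n$ by the summed test function; the remainder of your outline then goes through essentially as written.
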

\begin{proof}
Integrating \eqref{Eq4.1} over $I_n$, we find
\begin{equation}
\label{Eq6.1}
\tau^{-1}(\psi(w_n) - \psi(w_{n-1})) + \int_{\mathbb{R}^N}\frac{\bar{w}^n(x) - \bar{w}^n(z)}{|x - z|^{N+2s}}\, dz = 0.
\end{equation}
Let $\chi_n \in S_h$, multiply \eqref{Eq6.1} and \eqref{Eq4.2} by $\tau\chi_n$, take difference and sum over $n$, we get
$$
\begin{aligned}
& \displaystyle\sum_{n=1}^M(\psi(w_n) - \psi(w_{n-1}) - (\psi(W_n) - \psi(W_{n-1})),\chi_n)\\
& \quad + \displaystyle\sum_{n=1}^M\tau\int_{\mathbb{R}^N}\int_{\mathbb{R}^N}\frac{\bar{w}^n(x) - \bar{w}^n(z) - (W_n(x) - W_n(z))}{|x - z|^{N+2s}}(\chi_n(x) - \chi_n(z))\, dz\, dx = 0,
\end{aligned}
$$
From the identity $\displaystyle\sum_{n=1}^M(a_n - a_{n-1})b_n = a_Mb_M - a_0b_0 + \displaystyle\sum_{n=0}^{M-1}a_n(b_n - b_{n+1})$, we obtain
$$
\begin{aligned}
& (\psi(w_M) - \psi(W_M),\chi_M) + \displaystyle\sum_{n=0}^{M-1}(\psi(w_n) - \psi(W_n),\chi_n - \chi_{n+1}) \\
& \quad +\displaystyle\sum_{n=1}^M\tau\int_{\mathbb{R}^N}\int_{\mathbb{R}^N}\frac{\bar{w}^n(x) - \bar{w}^n(z) - (W_n(x) - W_n(z))}{|x - z|^{N+2s}}(\chi_n(x) - \chi_n(z))\, dz\, dx \\
& = (\psi(w_0) - \psi(W_0),\chi_0).
\end{aligned}
$$
Set $\chi_n = \tau\displaystyle\sum_{k=n}^M(P_h\bar{w}^k - \Pi_hW_k)$ for $0 \leq n \leq M$, since $\Pi_hW_k = W_k$ for all $k \ge 1$ and $\chi_n - \chi_{n+1} = \tau(P_h\bar{w}^n - \Pi_hW_n)$, we have
\begin{equation}
\label{Eq6.2}
\begin{aligned}
& \tau\displaystyle\sum_{n=0}^M(\psi(w_n) - \psi(W_n),P_h\bar{w}^n - \Pi_hW_n) \\
& \quad + \tau\displaystyle\sum_{n=1}^M\int_{\mathbb{R}^N}\int_{\mathbb{R}^N}\frac{\bar{w}^n(x) - \bar{w}^n(z) - (W_n(x) - W_n(z))}{|x - z|^{N+2s}}\\
& \qquad \times \tau\displaystyle\sum_{k=n}^M(P_h\bar{w}^k(x) - P_h\bar{w}^k(z) - (W_k(x) - W_k(z)))\, dz\, dx \\
& = \left(\psi(w_0) - \psi(W_0), \tau\displaystyle\sum_{n=0}^M(P_h\bar{w}^n - \Pi_hW_n)\right).
\end{aligned}
\end{equation}
Subtracting $\tau(\psi(w_0) - \psi(W_0),P_h\bar{w}^0 -\Pi_hW_0)$ on both side of \eqref{Eq6.2}, from the identity
$$\displaystyle\sum_{n=1}^M\left(a_n\displaystyle\sum_{k=n}^Ma_k \right) = \frac{1}{2}\left(\displaystyle\sum_{n=1}^Ma_n \right)^2 + \frac{1}{2}\displaystyle\sum_{n=1}^M(a_n)^2$$
and the fact that $P_h$ is $H^s-$projection, we obtain
\begin{equation*}
\begin{aligned}
& \tau\displaystyle\sum_{n=1}^M\int_{\mathbb{R}^N}\int_{\mathbb{R}^N}\frac{\bar{w}^n(x) - \bar{w}^n(z) - (W_n(x) - W_n(z))}{|x - z|^{N+2s}}\\
& \quad \times \tau\displaystyle\sum_{k=n}^M(P_h\bar{w}^k(x) - P_h\bar{w}^k(z) - (W_k(x) - W_k(z)))\, dz\, dx \\
\end{aligned}
\end{equation*}
\begin{equation*}
\begin{aligned}
& = \tau\displaystyle\sum_{n=1}^M\int_{\mathbb{R}^N}\int_{\mathbb{R}^N}\frac{P_h\bar{w}^n(x) - P_h\bar{w}^n(z) - (W_n(x) - W_n(z))}{|x - z|^{N+2s}}\\
& \quad \times \tau\displaystyle\sum_{k=n}^M(P_h\bar{w}^k(x) - P_h\bar{w}^k(z) - (W_k(x) - W_k(z)))\, dz\, dx\\
& = \int_{\mathbb{R}^N}\int_{\mathbb{R}^N}\tau\displaystyle\sum_{n=1}^M\Bigg((P_h\bar{w}^n(x) - P_h\bar{w}^n(z) - (W_n(x) - W_n(z))\\
& \quad \times \tau\displaystyle\sum_{k=n}^M(P_h\bar{w}^k(x) - P_h\bar{w}^k(z) - (W_k(x) - W_k(z))\Bigg)\frac{1}{|x - z|^{N+2s}}\, dz\, dx\\
& \ge \int_{\mathbb{R}^N}\int_{\mathbb{R}^N}\frac{1}{2}\frac{\left|\tau\displaystyle\sum_{n=1}^M(P_h\bar{w}^n(x) - P_h\bar{w}^n(z) - (W_n(x) - W_n(z))\right|^2}{|x - z|^{N+2s}}\, dz\, dx.
\end{aligned}
\end{equation*}
Hence, from \eqref{Eq6.2} we get
\begin{equation}
\label{Eq6.3}
\begin{aligned}
J_1 + J_2 &:= \tau\displaystyle\sum_{n=1}^M(\psi(w_n) - \psi(W_n),w_n - W_n)\\
& \quad + \int_{\mathbb{R}^N}\int_{\mathbb{R}^N}\frac{1}{2}\frac{\left|\tau\displaystyle\sum_{n=1}^M(P_h\bar{w}^n(x) - P_h\bar{w}^n(z) - (W_n(x) - W_n(z)))\right|^2}{|x - z|^{N+2s}}\, dz\, dx\\
& \leq \tau\displaystyle\sum_{n=1}^M(\psi(w_n) - \psi(W_n),w_n - P_h\bar{w}^n)\\
& \quad + \left(\psi(w_0) - \psi(W_0), \tau\displaystyle\sum_{n=1}^M(P_h\bar{w}^n - W_n)\right)\\
& =: J_3 + J_4.
\end{aligned}
\end{equation}
By \cite[Lemma 4.4]{Ebmeyer-Liu-2008}, we have
\begin{equation*}
\begin{aligned}
J_3 &= \displaystyle\sum_{n=1}^M\int_{I_n}(\psi(w_n) - \psi(W_n),w_n - P_hw)\\
& \leq \delta\displaystyle\sum_{n=1}^M\int_{I_n}(\psi(w_n) - \psi(W_n),w_n - W_n)\\
&\quad + c_{\delta}\displaystyle\sum_{n=1}^M\int_{I_n}(\psi(w_n) - \psi(P_hw),w_n - P_hw)\\
& =: J_{31} + J_{32}.
\end{aligned}
\end{equation*}
For $\delta > 0$ sufficiently small, the term $J_{31}$ can be absorbed into the left-hand side of \eqref{Eq6.3}. From \cite[Lemma 4.3]{Ebmeyer-Liu-2008}, there holds
$$J_{32} \leq c\left[\displaystyle\sum_{n=1}^M\int_{I_n}(\psi(w_n) - \psi(w),w_n - w) + \displaystyle\sum_{n=1}^M\int_{I_n}(\psi(w) - \psi(P_hw),w - P_hw)\right].$$
H\"{o}lder's and Young's inequalities yield that
$$J_4 \leq c_{\delta}\Vert\psi(w_0) - \psi(W_0)\Vert_{L^2(\mathbb{R}^N)}^2 + \delta\left\Vert\tau\displaystyle\sum_{n=1}^M(P_h\bar{w}^n - W_n)\right\Vert_{L^2(\mathbb{R}^N)}^2.
$$
By \cite[Lemma 4.2, Lemma 4.3]{DuqiangSIAMReview}, the term $\delta\Vert\tau\displaystyle\sum_{n=1}^M(P_h\bar{w}^n - W_n)\Vert_2^2$ can be absorbed. Moreover, since $\psi(W_0) = \Pi_h\psi(w_0)$, $\Vert\psi(w_0) - \psi(W_0)\Vert_2^2 = \Vert\psi(w_0) - \Pi_h\psi(w_0)\Vert_2^2$.

Next, from \cite[Lemma 4.3]{Ebmeyer-Liu-2008}, we obtain
$$(\psi(w) - \psi(w_n),w - W_n) \leq c[(\psi(w) - \psi(w_n),w - w_n) + (\psi(w_n) - \psi(W_n),w_n - W_n)],$$
hence,
$$J_1 \ge c\left(\displaystyle\sum_{n=1}^M\int_{I_n}(\psi(w) - \psi(W_n),w - W_n) - \displaystyle\sum_{n=1}^M\int_{I_n}(\psi(w) - \psi(w_n),w - w_n)\right).$$
Furthermore,
$$
\begin{aligned}
J_2 &= \int_{\mathbb{R}^N}\int_{\mathbb{R}^N}\frac{1}{2}\frac{\left|\tau\displaystyle\sum_{n=1}^M(P_h\bar{w}^n(x) - P_h\bar{w}^n(z) - (W_n(x) - W_n(z)))\right|^2}{|x - z|^{N+2s}}\, dz\, dx\\
& \ge \int_{\mathbb{R}^N}\int_{\mathbb{R}^N}\frac{1}{4}\frac{\left|\tau\displaystyle\sum_{n=1}^M(\bar{w}^n(x) - \bar{w}^n(z) - (W_n(x) - W_n(z)))\right|^2}{|x - z|^{N+2s}}\, dz\, dx\\
& \quad - \int_{\mathbb{R}^N}\int_{\mathbb{R}^N}\frac{1}{2}\frac{\left|\tau\displaystyle\sum_{n=1}^M(\bar{w}^n(x) - \bar{w}^n(z) - (P_h\bar{w}^n(x) - P_h\bar{w}^n(z)))\right|^2}{|x - z|^{N+2s}}\, dz\, dx
\end{aligned}
$$
and
$$
\begin{aligned}
&\int_{\mathbb{R}^N}\int_{\mathbb{R}^N}\frac{1}{4}\frac{\left|\tau\displaystyle\sum_{n=1}^M(\bar{w}^n(x) - \bar{w}^n(z) - (W_n(x) - W_n(z)))\right|^2}{|x - z|^{N+2s}}\, dz\, dx \\
& = \int_{\mathbb{R}^N}\int_{\mathbb{R}^N}\frac{1}{4}\frac{|(\bar{w}(x) - \bar{w}(z)) - (\overline{W}(x) - \overline{W}(z))|^2}{|x - z|^{N+2s}}\, dz\, dx
\end{aligned}
$$
hold. Combine all the estimates together, we finally obtain
$$
\begin{aligned}
& \int_0^T(\psi(w) - \psi(W),w - W) \\
& \qquad + \int_{\mathbb{R}^N}\int_{\mathbb{R}^N}\frac{|(\bar{w}(x) - \bar{w}(z)) - (\overline{W}(x) - \overline{W}(z))|^2}{|x - z|^{N+2s}}\, dz\, dx\\
& \leq c\Bigg(\displaystyle\sum_{n=1}^M\int_{I_n}(\psi(w_n) - \psi(w),w_n - w) + \displaystyle\sum_{n=1}^M\int_{I_n}(\psi(w) - \psi(P_hw),w - P_hw)\\
& \qquad + \int_{\mathbb{R}^N}\int_{\mathbb{R}^N}\frac{\left|\tau\displaystyle\sum_{n=1}^M(\bar{w}^n(x) - \bar{w}^n(z) - (P_h\bar{w}^n(x) - P_h\bar{w}^n(z)))\right|^2}{|x - z|^{N+2s}}\, dz\, dx\\
& \qquad + \Vert\psi(w_0) - \Pi_h\psi(w_0)\Vert_{L^2(\mathbb{R}^N)}^2\Bigg).
\end{aligned}
$$
By \eqref{Eq5.1}, we get the desired estimate.
\end{proof}
The proof of Theorem \ref{thm:main5.1} also implies the following result.
\begin{corollary}
There is a constant $c>0$ independent of $h$ and $\tau$ such that
$$
\begin{aligned}
\int_0^T\Vert w - W\Vert_{(w,\frac{m+1}{m})}^2 & \leq c\Bigg(\displaystyle\sum_{n=1}^M\int_{I_n}\Vert w_n - w\Vert_{(w,\frac{m+1}{m})}^2 + \int_0^T\Vert w - P_hw\Vert_{(w,\frac{m+1}{m})}^2 \\
& \qquad + \Vert \psi(w_0) - \Pi_h\psi(w_0)\Vert_2^2\Bigg).
\end{aligned}
$$
\end{corollary}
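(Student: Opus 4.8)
The plan is to re-run the argument of Theorem~\ref{thm:main5.1} essentially verbatim up to the inequality~\eqref{Eq6.3}, i.e. $J_1 + J_2 \le J_3 + J_4$, and then to observe that for this weaker estimate we need no information whatsoever from the Gagliardo-seminorm term $J_2$. Concretely, I would first reach \eqref{Eq6.3} exactly as in the theorem: integrate \eqref{Eq4.1} over $I_n$, subtract the result (tested against $\tau\chi_n$) from \eqref{Eq4.2}, apply the summation-by-parts identity, and make the same test-function choice $\chi_n = \tau\sum_{k=n}^M(P_h\bar{w}^k - \Pi_hW_k)$. This yields the nonnegative quantity $J_1 = \tau\sum_{n=1}^M(\psi(w_n)-\psi(W_n),w_n-W_n)$ together with the nonnegative half-weighted seminorm $J_2$ of $\tau\sum_{n=1}^M(P_h\bar{w}^n - W_n)$, bounded above by $J_3+J_4$. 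Since $\psi$ is monotone, $J_2\ge 0$, so I would simply discard it and retain only $J_1 \le J_3 + J_4$.

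From here the remaining estimates are identical to those in the proof of Theorem~\ref{thm:main5.1}. I would split $J_3 = J_{31}+J_{32}$ using \cite[Lemma 4.4]{Ebmeyer-Liu-2008}; absorb $J_{31} = \delta\,J_1$ into the left-hand side for $\delta$ small; bound $J_{32}$ via \cite[Lemma 4.3]{Ebmeyer-Liu-2008} by the consistency sum $\sum_{n=1}^M\int_{I_n}(\psi(w_n)-\psi(w),w_n-w)$ plus the projection sum $\sum_{n=1}^M\int_{I_n}(\psi(w)-\psi(P_hw),w-P_hw)$; and bound $J_4$ by $c_\delta\Vert\psi(w_0)-\Pi_h\psi(w_0)\Vert_2^2$ plus a term that is absorbed as before. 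Finally, the lower bound $J_1 \ge c\big(\sum_{n=1}^M\int_{I_n}(\psi(w)-\psi(W_n),w-W_n) - \sum_{n=1}^M\int_{I_n}(\psi(w)-\psi(w_n),w-w_n)\big)$, again from \cite[Lemma 4.3]{Ebmeyer-Liu-2008}, lets me replace $J_1$ by $\int_0^T(\psi(w)-\psi(W),w-W)$ modulo the consistency term, which I move to the right-hand side. Passing through the equivalence~\eqref{Eq5.1} then rewrites every bracket $(\psi(\cdot)-\psi(\cdot),\cdot-\cdot)$ as the corresponding quasi-norm $\Vert\cdot\Vert_{(w,\frac{m+1}{m})}^2$, giving exactly the claimed inequality.

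I do not expect a genuine obstacle here, since the corollary is strictly weaker than Theorem~\ref{thm:main5.1}: the whole content is the realization that keeping only $\int_0^T\Vert w-W\Vert_{(w,\frac{m+1}{m})}^2$ on the left (rather than also the seminorm of $\bar{w}-\overline{W}$) removes the need to bound $J_2$ from below, and it is precisely that lower bound in the theorem which generates the extra projection term $\int_{\mathbb{R}^N}\int_{\mathbb{R}^N}|\tau\sum_n(\bar{w}^n-\bar{w}^n(z)-(P_h\bar{w}^n-P_h\bar{w}^n(z)))|^2/|x-z|^{N+2s}$ on the right. The only point worth checking is that none of the subsequent absorptions depended on $J_2$; but the absorption of $J_{31}$ uses only the lower bound for $J_1$, and the absorption inside $J_4$ relies on \cite[Lemma 4.2, Lemma 4.3]{DuqiangSIAMReview}, neither of which involves $J_2$. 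Hence discarding $J_2$ is harmless, and the cleaner right-hand side follows.
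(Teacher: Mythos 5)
Your high-level idea matches the paper's — the corollary is Theorem \ref{thm:main5.1} with the final lower-bound step for $J_2$ omitted, since that step is what produces the extra projection-seminorm term on the right-hand side — but your execution has a genuine gap: you discard $J_2$ immediately after \eqref{Eq6.3}, and this breaks the treatment of $J_4$. After H\"older's and Young's inequalities one has
$$J_4 \leq c_{\delta}\Vert\psi(w_0) - \psi(W_0)\Vert_{L^2(\mathbb{R}^N)}^2 + \delta\Big\Vert\tau\sum_{n=1}^M(P_h\bar{w}^n - W_n)\Big\Vert_{L^2(\mathbb{R}^N)}^2,$$
and the second term is absorbed by invoking \cite[Lemma 4.2, Lemma 4.3]{DuqiangSIAMReview}: these are nonlocal Poincar\'e-type inequalities, which bound the $L^2$-norm of $\tau\sum_{n=1}^M(P_h\bar{w}^n - W_n)$ by a constant times its Gagliardo seminorm — that is, by a constant multiple of $J_2$ itself. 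So the absorption goes into $J_2$, not into $J_1$; your explicit check (``the absorption inside $J_4$ \dots neither of which involves $J_2$'') is precisely the point at which the argument fails. Once $J_2$ has been thrown away, the term $\delta\Vert\tau\sum_{n}(P_h\bar{w}^n - W_n)\Vert_{L^2}^2$ has nothing on the left-hand side to be absorbed into: $J_1$ controls $w_n - W_n$ only in the quasi-norm sense, pointwise in $n$, and gives no control of this time-aggregated quantity.

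The fix is what the paper actually does: carry $J_2$ — in its original form, the seminorm of $\tau\sum_{n=1}^M(P_h\bar{w}^n - W_n)$ — on the left-hand side through the entire argument, arriving at the intermediate estimate displayed right after the corollary (with both $J_2$ and $\int_0^T(\psi(w)-\psi(W),w-W)$ on the left and only the three terms on the right), and only then drop $J_2$ by nonnegativity. What the corollary dispenses with is not $J_2$ but the subsequent step bounding $J_2$ from below by the $\bar{w}-\overline{W}$ seminorm minus the $\bar{w}-P_h\bar{w}$ seminorm; skipping that step is what removes the fourth right-hand-side term of Theorem \ref{thm:main5.1}. (A minor slip besides: $J_2\ge 0$ because it is an integral of squares, not because $\psi$ is monotone; monotonicity of $\psi$ is what gives $J_1\ge 0$.)
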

Indeed, the corollary follows directly from the following estimate
\begin{equation*}
\begin{aligned}
& \int_{\mathbb{R}^N}\int_{\mathbb{R}^N}\frac{\left|\tau\displaystyle\sum_{n=1}^M(P_h\bar{w}^n(x) - P_h\bar{w}^n(z) - (W_n(x) - W_n(z)))\right|^2}{|x - z|^{N+2s}}\, dz\, dx\\
& \qquad + \int_0^T(\psi(w) - \psi(W),w - W) \\
& \leq c\Bigg(\displaystyle\sum_{n=1}^M\int_{I_n}(\psi(w_n) - \psi(w),w_n - w) + \displaystyle\sum_{n=1}^M\int_{I_n}(\psi(w) - \psi(P_hw),w - P_hw)\\
& \qquad + \Vert\psi(w_0) - \Pi_h\psi(w_0)\Vert_{L^2(\mathbb{R}^N)}^2\Bigg).
\end{aligned}
\end{equation*}

\section{The convergence rate.}\label{section6}
In the section we discuss the rates of convergence.
\begin{theorem}\label{thm:main6.1}
Let $0 < m < 1$ and $w_0 \in L^{\infty}(\mathbb{R}^N)\cap H^s_0(\Omega).$ Then there is a positive constant $c$ independent of $h$ and $\tau$ such that
\begin{equation}
\label{Eq7.1}
\begin{aligned}
\left(\int_0^T\Vert w - W\Vert_{(w,\frac{m+1}{m})}^2 + \displaystyle\sup_{1\leq n\leq N}\left\Vert\int_0^{t_n}(w - W)\right\Vert_{H^s(\mathbb{R}^N)}^2\right)^{1/2} \leq c(\tau + h^s).
\end{aligned}
\end{equation}
\end{theorem}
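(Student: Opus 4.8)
The plan is to derive \eqref{Eq7.1} directly from the a priori bound of Theorem~\ref{thm:main5.1}, but applied on each subinterval $[0,t_n]$ rather than only on $[0,T]$, so as to generate the supremum in the second term. Rerunning the proof of Theorem~\ref{thm:main5.1} with $T$ replaced by $t_n$ controls $\int_0^{t_n}\|w-W\|_{(w,\frac{m+1}{m})}^2$ together with the $H^s$-seminorm of $\int_0^{t_n}(w-W)$; since both $w$ and $W$ vanish on $\mathbb{R}^N\setminus\Omega$, the missing $L^2$-part of the full $H^s$-norm is recovered from the nonlocal Poincar\'e inequality \cite[Lemma 4.2, Lemma 4.3]{DuqiangSIAMReview}. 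After taking the supremum over $n$, the task reduces to bounding four quantities by $c(\tau^2+h^{2s})$: the time-consistency sum $\sum_n\int_{I_n}\|w_n-w\|_{(w,\frac{m+1}{m})}^2$, the spatial quasi-norm projection error $\int_0^T\|w-P_hw\|_{(w,\frac{m+1}{m})}^2$, the $H^s$-seminorm projection term of Theorem~\ref{thm:main5.1} (which, because $P_h$ is linear and $\tau\sum_n\bar w^n=\bar w$, equals $|\bar w-P_h\bar w|_{H^s}^2$), and the initial error $\|\psi(w_0)-\Pi_h\psi(w_0)\|_{L^2}^2$.

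For the temporal term I would first pass to $L^p$-differences: since $0<m<1$ we have $p:=\frac{m+1}{m}>2$, so the bound $\|v_2\|_{L^p}^{p/2}\le\|v_2\|_{(v_1,p)}$ from Section~\ref{section4} and its upper counterpart let me move between the quasi-norm and $L^p$. On $I_n$ one writes $w_n-w(t)=\int_t^{t_n}\partial_\sigma w\,d\sigma$, so that bounding by $\tau$ times the time derivative in the relevant norm and invoking the uniform energy estimate of the Lemma in Section~\ref{Subsection2.2} (together with the time regularity encoded in $\partial_t\psi(w)=-(-\Delta)^s w$) yields $O(\tau^2)$. The delicacy is that for $p>2$ the quasi-norm is not a norm, so the H\"older-type manipulations must be carried out through the monotonicity/convexity estimates \cite[Lemma 4.3, Lemma 4.4]{Ebmeyer-Liu-2008}, exactly as they were already used inside the proof of Theorem~\ref{thm:main5.1}.

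For the spatial terms I would exploit that $P_h$ is the best approximation in the $H^s$-seminorm, so $|\bar w-P_h\bar w|_{H^s}\le|\bar w-\Pi_h\bar w|_{H^s}$, and that $\int_0^T\|w-P_hw\|_{(w,\frac{m+1}{m})}^2$ is likewise controlled by the $L^p$- and $H^s$-approximation errors of the piecewise linear interpolant via \cite[Lemma 4.3]{Ebmeyer-Liu-2008}. Standard interpolation estimates for $C^0$-piecewise linears give $\|v-\Pi_h v\|_{L^2}+|v-\Pi_h v|_{H^s}\le c\,h^{s}\,\|v\|_{H^{2s}}$ at the available fractional regularity, producing the $h^s$ rate; the initial term is the $L^2$-interpolation error of $\psi(w_0)=u_0\in L^\infty\cap H_0^s$, which is $O(h^{2s})$ by the same estimate. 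All constants stay finite thanks to the uniform bounds of the Lemma in Section~\ref{Subsection2.2} and the regularity of solutions of \eqref{E:main} taken from \cite{BonFigalliRoOton, VPFR2017}.

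The main obstacle is the regularity input. Both the $O(\tau)$ and the $O(h^s)$ rates presuppose that $w=|u|^{m-1}u$ (equivalently $u$) is smooth enough: time regularity to estimate $w_n-w$ and spatial $H^{2s}$-type regularity to feed the interpolation bounds. In the singular fast-diffusion regime $0<m<1$ this is precisely the subtle point, and I would lean on the fractional regularity theory of \cite{BonFigalliRoOton, VPFR2017} together with the energy estimate proved above. Reconciling that regularity with the quasi-norm $\|\cdot\|_{(w,\frac{m+1}{m})}$, whose exponent $p=\frac{m+1}{m}$ degenerates as $m\to0$, is where the analysis is most technical and where one must verify that $c$ remains independent of $h$ and $\tau$.
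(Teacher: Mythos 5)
Your overall architecture agrees with the paper's: apply Theorem~\ref{thm:main5.1} with $T$ replaced by $t_n$ (the paper does this implicitly, and makes the replacement explicit in the proof of its Corollary on $T_0\in(t_{n-1},t_n)$), recover the missing $L^2$-part of the $H^s$-norm from the nonlocal Poincar\'e inequality of \cite[Lemma 4.2, Lemma 4.3]{DuqiangSIAMReview}, use linearity of $P_h$ and $\tau\sum_{n}\bar w^n=\bar w$ to collapse the projection term to $|\bar w - P_h\bar w|_{H^s}^2$, and then bound the four right-hand-side quantities by $c(\tau^2+h^{2s})$. Your handling of the spatial, projection and initial terms is in the same spirit as the paper's \eqref{Eq7.3}--\eqref{Eq7.5}, modulo the fact that you route everything through $H^{2s}$-interpolation estimates for $\Pi_h$, while the paper splits the quasi-norm using $\Vert w\Vert_{L^\infty}$ (exploiting $\tfrac{1-m}{m}>0$) and uses an $L^2$-duality bound for $P_h$ together with a $W^{\frac{2ms}{m+1},\frac{m+1}{m}}$ estimate.

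The genuine gap is in your treatment of the temporal consistency term $\sum_n\int_{I_n}\Vert w_n-w\Vert_{(w,\frac{m+1}{m})}^2$. You propose to pass to $L^p$-differences of $w$ itself ($p=\tfrac{m+1}{m}>2$), write $w_n-w=\int_t^{t_n}\partial_\sigma w\,d\sigma$, and bound by ``$\tau$ times the time derivative in the relevant norm.'' But $\partial_t w = m|u|^{m-1}\partial_t u$ is singular wherever $u$ vanishes (recall $m-1<0$), and no $L^p$-type control of $\partial_t w$ is available or cited; this is exactly the degeneracy that makes the fast-diffusion regime hard. The paper's key device, which your proposal misses, is the substitution $v:=|u|^{\frac{m-1}{2}}u$ (a signed $u^{\frac{m+1}{2}}$): by \cite[Lemma 4.1, Lemma 4.2]{Ebmeyer-Liu-2008},
\[
\int_{I_n}\Vert w_n-w\Vert_{(w,\frac{m+1}{m})}^2 \le c\int_{I_n}(\psi(w_n)-\psi(w),w_n-w)\le c\int_{I_n}\int_{\mathbb{R}^N}|v_n-v|^2,
\]
after which the fundamental theorem of calculus and Cauchy--Schwarz reduce everything to $\tau^2\Vert\partial_t u^{\frac{m+1}{2}}\Vert^2_{L^2(0,T;L^2(\mathbb{R}^N))}$, which is precisely the quantity controlled by the regularity theory of \cite{VPFR2017}. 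Without this reduction your $O(\tau)$ rate rests on unestablished regularity. In addition, your appeal to ``the uniform energy estimate of the Lemma in Section~\ref{Subsection2.2}'' is misplaced for this term: that lemma is a discrete energy bound for the Galerkin iterates $W_n$ and gives no information about the exact solution $w$, which is the only object entering the temporal consistency error.
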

\begin{proof}
Set $v := |u|^{\frac{m-1}{2}}u$, by \cite[Lemma 4.1, Lemma 4.2]{Ebmeyer-Liu-2008}, we have
$$\int_{I_n}\Vert w_n - w\Vert_{(w,\frac{m+1}{m})}^2 \leq c\int_{I_n}(\psi(w_n) - \psi(w),w_n - w) \leq c\int_{I_n}\int_{\mathbb{R}^N}|v_n - v|^2.$$
Since
$$\int_{I_n}|v_n - v|^2 = \int_{I_n}\left |\int_t^{t_n}v_t\right|^2 \leq \int_{I_n}\int_t^{t_n}|v_t|^2|I_n| \leq \tau^2\int_{I_n}|v_t|^2$$
and $|v_t|^2 = |\partial_t(|u|^{\frac{m-1}{2}}u)|^2$, by the regularity results for nonlinear fractional diffusion equations from \cite{VPFR2017},
we obtain
\begin{equation}
\label{Eq7.2}
\displaystyle\sum_{n=1}^M\int_{I_n}\Vert w_n - w\Vert_{(w,\frac{m+1}{m})}^2 \leq c\tau^2\left\Vert \partial_tu^{\frac{m+1}{2}}\right\Vert_{L^2(0,T;L^2(\mathbb{R}^N))} \leq c\tau^2.
\end{equation}
The same arguments as Section 4.4 in \cite{Brenner-Scott} and the nonlocal Poincar$\acute{e}$ inequality $\mathbf{I}$ (see \cite[Lemma 4.3]{DuqiangSIAMReview}) imply that
\begin{equation}
\label{Eq7.3}
\begin{aligned}
& \int_0^T\Vert w - P_hw\Vert_{(w,\frac{m+1}{m})}^2 = \int_0^T\int_{\mathbb{R}^N}(|w| + |w - P_hw|)^{\frac{1-m}{m}}|w - P_hw|^2\\
& \leq c\Bigg(\Vert w\Vert_{L^{\infty(0,T;L^{\infty}(\mathbb{R}^N))}}^{\frac{1-m}{m}}\Vert w - P_hw\Vert_{L^2(0,T;L^2(\mathbb{R}^N))}^2 + \Vert w - P_hw\Vert_{L^{\frac{m+1}{m}}(0,T;L^{\frac{m+1}{m}}(\mathbb{R}^N))}^{\frac{m+1}{m}}\Bigg)\\
& \leq ch^{2s}\Bigg(\int_0^T\int_{\mathbb{R}^N}\int_{\mathbb{R}^N}\frac{(w(x) - w(z))^2}{|x - z|^{N+2s}}\, dz\, dx\, dt + \Vert w \Vert_{L^{\frac{m+1}{m}}(0,T;W^{\frac{2ms}{m+1},\frac{m+1}{m}}(\mathbb{R}^N))}^{\frac{m+1}{m}}\Bigg)\\
& \leq ch^{2s}.
\end{aligned}
\end{equation}
From the regularity results of \cite{VPFR2017}, we have
\begin{equation}
\label{Eq7.4}
\begin{aligned}
& \int_{\mathbb{R}^N}\int_{\mathbb{R}^N}\frac{\left|\tau\displaystyle\sum_{n=1}^M[(\bar{w}^n(x) - \bar{w}^n(z)) - (P_h\bar{w}^n(x) - P_h\bar{w}^n(z))]\right|^2}{|x - z|^{N+2s}}\, dz\, dx\\
& = \int_{\mathbb{R}^N}\int_{\mathbb{R}^N}\frac{\left|(\bar{w}(x) - \bar{w}(z)) - (P_h\bar{w}(x) - P_h\bar{w}(z))\right|^2}{|x - z|^{N+2s}}\, dz\, dx\\
& \leq ch^{2s}\int_{\mathbb{R}^N}\int_{\mathbb{R}^N}\frac{\left|\bar{w}(x) - \bar{w}(z)\right|^2}{|x - z|^{N+2s}}\, dz\, dx\\
& \leq ch^{2s}.
\end{aligned}
\end{equation}
Furthermore, since $w_0 \in L^{\infty}(\mathbb{R}^N)\cap H^s_0(\Omega)$, $\psi(w_0) \in H^s_0(\Omega)$ and
\begin{equation}
\label{Eq7.5}
\begin{aligned}
& \Vert \psi(w_0) - \Pi_h\psi(w_0)\Vert_{L^2(\mathbb{R}^N)}^2 \leq ch^{2s}\int_{\mathbb{R}^N}\int_{\mathbb{R}^N}\frac{(\psi(w_0)(x) - \psi(w_0)(z))^2}{|x - z|^{N+2s}}\, dz\, dx \leq ch^{2s}.
\end{aligned}
\end{equation}
From Theorem \ref{thm:main5.1} and \eqref{Eq7.2}-\eqref{Eq7.5}, there holds
\begin{equation}
\label{Eq7.6}
\begin{aligned}
& \int_{\mathbb{R}^N}\int_{\mathbb{R}^N}\frac{|(\bar{w}(x) - \bar{w}(z)) - (\overline{W}(x) - \overline{W}(z))|^2}{|x - z|^{N+2s}}\, dz\, dx + \int_0^T\Vert w - W\Vert_{(w,\frac{m+1}{m})}^2\\
& \leq c(\tau^2 + h^{2s}),
\end{aligned}
\end{equation}
for $\bar{w}(x) = \int_0^Tw(x,t)\, dt$, $\overline{W}(x) = \int_0^TW(x,t)\, dt$ and $T = t_M$. By the nonlocal Poincar$\acute{e}$ inequality $\mathbf{I}$ (see \cite[Lemma 4.3]{DuqiangSIAMReview}) and \cite[Lemma 4.2]{DuqiangSIAMReview} again, there exists a positive constant $c$ such that
$$
\begin{aligned}
& \left\Vert\int_0^{t_n}(w - W)\right\Vert_{L^2(\mathbb{R}^N)}^2 \leq c\int_{\mathbb{R}^N}\int_{\mathbb{R}^N}\frac{\left(\int_0^{t_n}(w - W)(x) - \int_0^{t_n}(w - W)(z)\right)^2}{|x - z|^{N+2s}}\, dz\, dx
\end{aligned}
$$
and the assertion follows.
\end{proof}
\begin{corollary}
Under the assumptions of Theorem \ref{thm:main6.1}, there exists a positive constant $c$ independent of $h$ and $\tau$ such that
$$\Vert u - U\Vert_{L^{m+1}(0,T;L^{m+1}(\mathbb{R}^N))} \leq c(\tau + h^s),$$
where $U = \psi(W)$.
\end{corollary}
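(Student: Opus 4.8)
The plan is to transfer the quasi-norm estimate of Theorem \ref{thm:main6.1} into the Lebesgue norm $L^{m+1}(0,T;L^{m+1}(\mathbb{R}^N))$ by means of the equivalences recorded in Section \ref{section4}. First observe that the computation $\psi(w)=|w|^{\frac{1-m}{m}}w=u$ and, identically, $\psi(W)=U$ show that $u-U=\psi(w)-\psi(W)$, so that
$$\Vert u-U\Vert_{L^{m+1}(0,T;L^{m+1}(\mathbb{R}^N))}^{m+1}=\int_0^T\int_{\mathbb{R}^N}|\psi(w)-\psi(W)|^{m+1}\,dx\,dt.$$
The right-hand side is precisely the quantity that the quasi-norm $\Vert\,\cdot\,\Vert_{(u,m+1)}$ is designed to measure, and by \eqref{Eq5.1} the time-integrated quasi-norm $\int_0^T\Vert u-U\Vert_{(u,m+1)}^2$ is comparable to $\int_0^T\Vert w-W\Vert_{(w,\frac{m+1}{m})}^2$, which Theorem \ref{thm:main6.1} (see \eqref{Eq7.6}) bounds by $c(\tau^2+h^{2s})$.

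Next I would convert the quasi-norm into the $L^{m+1}$ norm. Since $0<m<1$ gives $1<p:=m+1<2$, the comparison of Section \ref{section4} applies. The crucial point is to use it in the \emph{linear} form $\Vert u-U\Vert_{L^{m+1}(\mathbb{R}^N)}\le c\,\Vert u-U\Vert_{(u,m+1)}$, valid pointwise in $t$ with a constant depending only on the $L^{m+1}$ norms of $u$ and $U$; a Hölder argument applied to the defining integral $\int(|u|+|u-U|)^{p-2}|u-U|^2$ yields this inequality with the explicit factor $\Vert\,|u|+|u-U|\,\Vert_{L^{m+1}}^{(2-p)/2}$. By the a priori estimate of the second lemma of Section \ref{Section2}, $\Vert W_n\Vert_{L^{(m+1)/m}}$ is bounded uniformly in $n,h,\tau$; since $|\psi(W)|^{m+1}=|W|^{(m+1)/m}$, this makes $\Vert U\Vert_{L^{m+1}}$, and likewise $\Vert u\Vert_{L^{m+1}}$, bounded uniformly, so the comparison constant is uniform in $t,h,\tau$.

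Finally I would integrate in time. Raising the pointwise bound to the power $m+1$ and integrating gives
$$\Vert u-U\Vert_{L^{m+1}(0,T;L^{m+1})}^{m+1}\le c\int_0^T\Vert u-U\Vert_{(u,m+1)}^{m+1}\,dt,$$
and, because $m+1<2$, Hölder's inequality with exponent $2/(m+1)$ yields
$$\int_0^T\Vert u-U\Vert_{(u,m+1)}^{m+1}\,dt\le T^{\frac{1-m}{2}}\Big(\int_0^T\Vert u-U\Vert_{(u,m+1)}^2\,dt\Big)^{\frac{m+1}{2}}\le c\,(\tau+h^s)^{m+1}.$$
Taking the $(m+1)$-th root produces $\Vert u-U\Vert_{L^{m+1}(0,T;L^{m+1})}\le c(\tau+h^s)$, as claimed.

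The main obstacle is the bookkeeping of exponents. If one uses the comparison only in the weaker form $c_1\Vert v_2\Vert_{L^p}^p\le\Vert v_2\Vert_{(v_1,p)}$ and then applies the Cauchy--Schwarz inequality in time, the resulting rate is merely $(\tau+h^s)^{1/(m+1)}$, which is strictly worse than claimed for $0<m<1$. Recovering the sharp rate $(\tau+h^s)$ forces the \emph{linear} comparison above, and therefore an essential use of the uniform $L^{m+1}$ a priori bounds to keep the comparison constant independent of $t$, $h$ and $\tau$; this is the step that must be handled with care, the remainder being the routine Hölder estimate in time.
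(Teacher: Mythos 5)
Your proposal is correct, and it is essentially the argument the paper omits (the text defers to \cite[Corollary 6.2]{Ebmeyer-Liu-2008}): pass from Theorem \ref{thm:main6.1} to $\int_0^T\Vert u-U\Vert_{(u,m+1)}^2\le c(\tau+h^s)^2$ via the equivalence \eqref{Eq5.1}, convert the quasi-norm to the $L^{m+1}$-norm by a H\"older argument whose constant stays uniform in $t$, $h$, $\tau$ thanks to $w\in L^{\infty}(0,T;L^{\infty}(\mathbb{R}^N))$ and the a priori bound $\sup_n\Vert W_n\Vert_{L^{(m+1)/m}}\le c$ from Section \ref{Section2}, and finish with H\"older in time. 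Your observation that one must use the linear comparison $\Vert u-U\Vert_{L^{m+1}}\le c\Vert u-U\Vert_{(u,m+1)}$ (obtained by H\"older from the definition of the quasi-norm, with constant depending on $\Vert\,|u|+|u-U|\,\Vert_{L^{m+1}}^{(1-m)/2}$) rather than the power form displayed in Section \ref{section4} is exactly the detail needed to get the full rate $\tau+h^s$ rather than $(\tau+h^s)^{1/(m+1)}$.
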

The proof of the above corollary is similar to that of \cite[Corollary 6.2]{Ebmeyer-Liu-2008}, hence we omit it.
\begin{corollary}
Under the assumptions of Theorem \ref{thm:main6.1}, then
$$
\begin{aligned}
\left(\int_0^T\int_{\mathbb{R}^N}|u - U||w - W| \right)^{\frac{1}{2}} + \left\Vert\int_0^t(w - W)\right\Vert_{L^{\infty}(0,T;H^s(\mathbb{R}^N))} \leq c(\tau + h^s).
\end{aligned}
$$
\end{corollary}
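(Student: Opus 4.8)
The plan is to estimate the two summands of the left-hand side separately, showing each is $O(\tau + h^s)$ and then adding them.

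For the first summand I would exploit that the integrand has a sign. Since $0 < m < 1$, the exponent $\frac{1-m}{m}$ is positive, so $\psi(y) = |y|^{\frac{1-m}{m}}y$ is strictly increasing and $(\psi(w) - \psi(W))(w - W) \ge 0$ pointwise. Recalling that $u = \psi(w)$ and $U = \psi(W)$, this gives $|u - U|\,|w - W| = (u - U)(w - W) = (\psi(w) - \psi(W))(w - W)$ a.e., whence
$$\int_0^T\int_{\mathbb{R}^N}|u - U|\,|w - W| = \int_0^T(\psi(w) - \psi(W), w - W).$$
By the quasi-norm equivalence \eqref{Eq5.1} with $v = W$ the right-hand side is comparable to $\int_0^T\Vert w - W\Vert_{(w,\frac{m+1}{m})}^2$, which by \eqref{Eq7.6} (equivalently Theorem \ref{thm:main6.1}) is at most $c(\tau^2 + h^{2s})$. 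Taking square roots and using $(\tau^2 + h^{2s})^{1/2}\le \tau + h^s$ controls the first summand.

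For the second summand I would promote the discrete-time supremum already furnished by Theorem \ref{thm:main6.1} to the full $L^\infty(0,T;H^s)$ norm. Fix $t \in (t_{n-1},t_n]$ and set $\lambda = (t_n - t)/\tau \in [0,1)$. Because $W \equiv W_n$ is constant on $I_n$, its contribution cancels in the convex combination, yielding the identity
$$\int_0^t(w - W) = \lambda\int_0^{t_{n-1}}(w - W) + (1-\lambda)\int_0^{t_n}(w - W) + R_n(t),$$
where the remainder $R_n(t) = \int_{t_{n-1}}^t w\,ds - (1-\lambda)\int_{t_{n-1}}^{t_n}w\,ds$ depends only on $w$. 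The two nodal integrals are controlled at $t_{n-1}$ and $t_n$ by Theorem \ref{thm:main6.1}, so by the triangle inequality and $\lambda + (1-\lambda)=1$ their combined $H^s$-norm is at most $c(\tau + h^s)$. For the remainder, the regularity of $w$ from \cite{VPFR2017} (in particular $w \in L^\infty(0,T;H^s(\mathbb{R}^N))$) gives $\Vert R_n(t)\Vert_{H^s(\mathbb{R}^N)} \le 2\tau\Vert w\Vert_{L^\infty(0,T;H^s(\mathbb{R}^N))} \le c\tau$, since each integral defining $R_n(t)$ runs over a subinterval of $I_n$ of length at most $\tau$. Taking the supremum over all $t\in[0,T]$ then yields the bound for the second summand, and adding the two estimates finishes the proof.

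I expect the main obstacle to be precisely this passage from the discrete supremum $\sup_{1\le n\le M}$ delivered by Theorem \ref{thm:main6.1} to the continuous supremum over all $t\in[0,T]$. A naive splitting $\int_0^t = \int_0^{t_{n-1}} + \int_{t_{n-1}}^t$ leaves the term $(t - t_{n-1})W_n$, whose $H^s$-seminorm I cannot bound uniformly in $n$: the available energy estimate of Section \ref{Section2} controls only the $\tau$-weighted sum $\tau\sum_{n=1}^M |W_n|_{H^s(\mathbb{R}^N)}^2$, not the individual seminorms. The convex-combination identity above is what circumvents this, since the ill-behaved piecewise-constant part $W_n$ drops out exactly and only the smooth quantity $w$ survives in $R_n(t)$, where its time regularity can be invoked.
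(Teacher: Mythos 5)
Your proof is correct, and its treatment of the second summand takes a genuinely different route from the paper's. The paper works at the level of the scheme: since $W$ is constant on $(t_{n-1},t_n]$, for $T_0\in(t_{n-1},t_n)$ the last equation of \eqref{Eq4.2} can be rewritten with $W(T_0)$ in place of $W_n$, and then the entire proofs of Theorem \ref{thm:main5.1} and Theorem \ref{thm:main6.1} are rerun with $t_M$ replaced by $T_0$, giving $\bigl\Vert\int_0^{T_0}(w-W)\bigr\Vert_{H^s(\mathbb{R}^N)}\le c(\tau+h^s)$ uniformly in $T_0$. You instead use Theorem \ref{thm:main6.1} as a black box and post-process its discrete supremum: your convex-combination identity is valid (the $W_n$ contribution cancels precisely because $(1-\lambda)\tau = t-t_{n-1}$), the two nodal terms are controlled by Theorem \ref{thm:main6.1} (with the convention $\int_0^{t_0}=0$ for $n=1$), and the remainder $R_n(t)$, which involves only $w$, is $O(\tau)$ in $H^s$. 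Your handling of the first summand (monotonicity of $\psi$, so $|u-U||w-W|=(\psi(w)-\psi(W))(w-W)$, then \eqref{Eq5.1} and \eqref{Eq7.6}) is exactly what the paper leaves implicit, so that part is fine. The trade-off between the two routes: yours is more modular and avoids reopening the earlier proofs, thereby sidestepping a detail the paper glosses over (when the endpoint is moved to $T_0$ the final interval has length $\tau'<\tau$ while the discrete difference quotient keeps the denominator $\tau$); the price is that you need the mildly stronger regularity input $w\in L^\infty(0,T;H^s(\mathbb{R}^N))$, whereas the proof of Theorem \ref{thm:main6.1} only uses $L^2$-in-time bounds on $|w|_{H^s}$. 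That input is consistent with the paper's own appeals to \cite{VPFR2017} and follows formally from the energy estimate obtained by testing \eqref{Eq4.1} with $\partial_t w$, which yields $|w(t)|_{H^s}\le|w_0|_{H^s}$ since $\psi'\ge 0$; you should state this justification explicitly rather than leave it as a parenthetical citation.
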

\begin{proof}
Assume $T_0 \in (t_{n-1},t_n)$, note that $W$ is constant with respect to $t$ in $(t_{n-1},t_n]$, the $M$th equation of \eqref{Eq4.2} can be written as
$$
\begin{aligned}
& \left(\frac{\psi(W(T_0)) - \psi(W(t_{n-1}))}{\tau},\chi_M \right)\\
&+ \int_{\mathbb{R}^N}\int_{\mathbb{R}^N}\frac{W(T_0)(x) - W(T_0)(z)}{|x - z|^{N+2s}}\big(\chi_M(x) - \chi_M(z)\big)\, dz\, dx = 0 \quad \forall\chi_M \in S_h
\end{aligned}
$$
Hence, replacing $t_M$ by $T_0$ and $W_M$ by $W(T_0)$ in the proofs of Theorem \ref{thm:main5.1} and Theorem \ref{thm:main6.1}, we obtain
$$
\begin{aligned}
\left\Vert\int_0^{T_0}(w - W)\right\Vert_{H^s(\mathbb{R}^N)} \leq c(\tau + h^s).
\end{aligned}
$$
\end{proof}

\section{Some extensions}\label{section7}
In this section, we indicate how to generalize the results in Section 3 and Section 4 to a more general parabolic integral equation. Let $\Omega_s\subset\mathbb{R}^n$ and $\Omega_c\subset\mathbb{R}^n$ be bounded and open polyhedral domains and $T\in (0, +\infty)$, $\Omega_s$ and $\Omega_c$ have a nonempty common boundary, we consider the following nonlinear diffusion problem
\begin{equation}\label{Eq8.1}
\begin{cases}
u_t = \mathcal{L}(|u|^{m-1}u) &\quad \mathrm{on} \,\, \Omega_s , t > 0,\\
\mathcal{V}u = 0  &\quad \mathrm{on} \,\, \Omega_c, t > 0,\\
u(x,0) = u_0(x) &\quad \mathrm{on} \,\, \Omega_s\cup \Omega_c,
\end{cases}
\end{equation}
where $u(x,t):(\Omega_s\cup\Omega_c) \times [0,T] \to \mathbb{R}$ and $\mathcal{V}$ denotes a linear operator of constraints acting on a volume $\Omega_c$ which is disjoint from $\Omega_s$.

Given an open bounded subset $\Omega_s \subset \mathbb{R}^n$, $\Omega_c$ is the corresponding {\it interaction domain}. For $u(x) : \Omega \to \mathbb{R}$, the action of the linear operator $\mathcal{L}$ on the function $u(x)$ is defined as
\begin{equation}
\label{Eq8.2}
\mathcal{L}u(x):= 2\int_{\Omega_s\cup \Omega_c}\big(u(y) - u(x)\big)\gamma (x,y)\, dy\qquad \forall x\in \Omega_s \subseteq \mathbb{R}^n,
\end{equation}
where the volume of $\Omega_s$ is nonzero and the kernel $\gamma(x,y)$ is a nonnegative symmetric mapping, i.e., $\gamma (x,y) = \gamma (y,x) \ge 0$. We refer the interested readers to \cite{DuqiangSIAMReview} for more details.

As in \cite{DuqiangSIAMReview}, given positive constants $\gamma_0$ and $\varepsilon$, assume that $\gamma$ satisfies
\begin{equation}
\label{Eq8.3}
\begin{cases}
\gamma(x,y) \ge 0 & \quad \forall y \in B_{\varepsilon}(x)\,\, \mathrm{and}\,\, \gamma(x,y) \ge \gamma_0 > 0 \quad \mathrm{when}\,\, y \in B_{\varepsilon/2}(x),\\ \gamma(x,y) = 0 & \quad \forall y \in (\Omega_s\cup\Omega_c)\setminus B_{\varepsilon}(x),
\end{cases}
\end{equation}
where $B_{\varepsilon}(x) := \{y \in \Omega_s\cup\Omega_c : |y - x| \leq \varepsilon\}$, for all $x \in \Omega_s\cup\Omega_c$. Furthermore, assume that
there exist $s \in (0,1)$ and positive constants $\gamma_*$ and $\gamma^*$ such that, for all $x \in \Omega_s$,
\begin{equation}\label{case1}
\frac{\gamma_*}{|y - x|^{n+2s}} \leq \gamma(x,y) \leq \frac{\gamma^*}{|y - x|^{n+2s}} \qquad \text{for}\,\, y \in B_{\varepsilon}(x).
\end{equation}

For $s \in (0,1)$, define the fractional-order Sobolev space as
$$H^s(\Omega_s\cup\Omega_c) := \{u \in L^2(\Omega_s\cup\Omega_c) : \Vert u\Vert_{L^2(\Omega_s\cup\Omega_c)} + |u|_{H^s(\Omega_s\cup\Omega_c)} < \infty\},$$
where $|u|_{H^s(\Omega_s\cup\Omega_c)}^2 := \int_{\Omega_s\cup\Omega_c}\int_{\Omega_s\cup\Omega_c}(u(y) - u(x))^2\gamma(x,y)\, dy\, dx$.
Assuming $u_0 \in L^{\infty}(\Omega_s\cup\Omega_c)$, $u(x,t)$ a weak solution of \eqref{Eq8.1} if
\begin{equation}
\label{Eq8.5}
\begin{aligned}
& - \int_0^T\int_{\Omega_s\cup \Omega_c}u\varphi_t\, dx\, dt\\
& \quad + \int_0^T\int_{\Omega_s\cup \Omega_c}\int_{\Omega_s\cup \Omega_c}\big((|u|^{m-1}u)(y) - (|u|^{m-1}u)(x)\big)\gamma(x,y)\big(\varphi(y) - \varphi(x)\big)\, dy\, dx\, dt\\
& = \int_{\Omega_s\cup \Omega_c} u_0\varphi_0\, dx,
\end{aligned}
\end{equation}
where $\varphi(\cdot,T) \equiv 0$ on $\Omega_s$, $\varphi_0 = \varphi(\cdot,0)$.
Let $T_h$ be a family of decompositions of $\Omega_s$ into closed $N-$simplices and $h$ is the mesh-size, assume $T_h$ is a regular triangulation and there exists a constant $c > 0$ such that
$$|K| \ge c(\text{diam} \,\,\, K)^N \text{ for all simplices }K \in T_h.$$
Introduce the space
\begin{center}
$S_h(\Omega_s\cup\Omega_c) =$ \{$\phi_h \in C^0(\overline{\Omega}):\phi_h$ is piecewise linear w.r.t. $T_h$, $\mathcal{V}\phi_h = 0$ on $\Omega_c$\}.
\end{center}
Let $\Pi_hv \in S_h$ denote the $C^0-$piecewise linear interpolant of the function $v$ and $P_h : H^s(\Omega_s\cup\Omega_c) \to S_h$ be the $H^s-$projection onto $S_h$ defined by
$$ \int_{\Omega_s\cup \Omega_c}\int_{\Omega_s\cup \Omega_c}\big((v - P_hv)(y) - (v - P_hv)(x)\big)\gamma(x,y)\big(\chi(y) - \chi(x)\big)\, dy\, dx = 0 \quad \forall\chi \in S_h.$$
As in Section 2, set $w := |u|^{m-1}u$, then $\partial_tu = \mathcal{L}w$ and
\begin{equation}
\label{Eq8.6}
\partial_t\psi(w) = \mathcal{L}w,
\end{equation}
where $\psi(s) := |s|^{\frac{1-m}{m}}s$. Let $W_n \in S_h, n = 1,2,\ldots,$ be the solutions of the system
\begin{equation}
\label{Eq8.7}
\begin{aligned}
& \Big(\frac{\psi(W_n) - \psi(W_{n-1})}{\tau},\chi_n \Big)\\
&+ \int_{\Omega_s\cup \Omega_c}\int_{\Omega_s\cup \Omega_c}\big(W_n(y) - W_n(x)\big)\gamma(x,y)\big(\chi_n(y) - \chi_n(x)\big)\, dy\, dx = 0 \quad \forall\chi_n \in S_h,
\end{aligned}
\end{equation}
where $W_0 := \psi^{-1}(\Pi_h\psi(w_0))$. Then the finite element approximation $W(x,t)$ of $w(x,t)$ is defined as
\begin{equation}
\label{Eq8.8}
W(x,t) =
\begin{cases}
W_0(x) &\quad \mathrm{if} \,\, t = 0,\\
W_n(x,t)&\quad \mathrm{if} \,\, t \in (t_{n-1},t_n], 1 \leq n \leq N.
\end{cases}
\end{equation}
The same arguments of Section 3 and Section 4 imply the following results.
\begin{theorem}\label{thm:main7.6.1}
For any $m > 0$ there is a positive constant $c$ independent of $h$ and $\tau$ such that
$$
\begin{aligned}
& \int_0^T\Vert w - W\Vert_{(w,\frac{m+1}{m})}^2 + \int_{\Omega_s\cup\Omega_c}\int_{\Omega_s\cup\Omega_c}|(\bar{w}(y) - \bar{w}(x)) - (\overline{W}(y) - \overline{W}(x))|^2\gamma(x,y)\, dy\, dx\\
& \leq c\Big(\displaystyle\sum_{n=1}^N\int_{I_n}\Vert w_n - w\Vert_{(w,\frac{m+1}{m})}^2 + \int_0^T\Vert w - P_hw\Vert_{(w,\frac{m+1}{m})}^2 + \Vert \psi(w_0) - \Pi_h\psi(w_0)\Vert_2^2\\
&\quad + \int_{\Omega_s\cup\Omega_c}\int_{\Omega_s\cup\Omega_c}\left|\tau\displaystyle\sum_{n=1}^N[(\bar{w}^n(y) - \bar{w}^n(x)) - (P_h\bar{w}^n(y) - P_h\bar{w}^n(x))]\right|^2\gamma(x,y)\, dy\, dx  \Big),
\end{aligned}
$$
where $\bar{w}(x) = \int_0^Tw(x,t)\, dt$ and $\overline{W}(x) = \int_0^TW(x,t)\, dt$.
\end{theorem}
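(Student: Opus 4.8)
The plan is to reproduce the argument proving Theorem~\ref{thm:main5.1} essentially verbatim, replacing the fractional kernel $|x-z|^{-(N+2s)}$ and the domain $\mathbb{R}^N$ throughout by the symmetric kernel $\gamma(x,y)$ and the domain $\Omega_s\cup\Omega_c$, respectively. The starting point is to integrate the continuous identity~\eqref{Eq8.6} over $I_n$ and test against an arbitrary $\chi_n\in S_h$, producing
$$(\psi(w_n)-\psi(w_{n-1}),\chi_n) + \tau\int_{\Omega_s\cup\Omega_c}\int_{\Omega_s\cup\Omega_c}\bigl(\bar w^n(y)-\bar w^n(x)\bigr)\gamma(x,y)\bigl(\chi_n(y)-\chi_n(x)\bigr)\,dy\,dx = 0,$$
the exact analog of~\eqref{Eq6.1} tested in $S_h$. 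Subtracting the discrete scheme~\eqref{Eq8.7} multiplied by $\tau$, summing over $n$, and applying the Abel summation identity $\sum_{n=1}^N (a_n-a_{n-1})b_n = a_Nb_N - a_0b_0 + \sum_{n=0}^{N-1}a_n(b_n-b_{n+1})$ reduces the problem to an identity for the error $\psi(w_n)-\psi(W_n)$ paired against the test sequence.

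Next I would make the same choice of test function as in the proof of Theorem~\ref{thm:main5.1}, namely $\chi_n = \tau\sum_{k=n}^N (P_h\bar w^k - \Pi_h W_k)$, exploiting $\Pi_h W_k = W_k$ and the telescoping relation $\chi_n - \chi_{n+1} = \tau(P_h\bar w^n - \Pi_h W_n)$ to obtain the analog of~\eqref{Eq6.2}. The two algebraic ingredients carry over unchanged: the quadratic-sum identity $\sum_{n=1}^N\bigl(a_n\sum_{k=n}^N a_k\bigr) = \frac{1}{2}\bigl(\sum_n a_n\bigr)^2 + \frac{1}{2}\sum_n a_n^2$ is purely formal, and the orthogonality used to pass from $\bar w^n$ to $P_h\bar w^n$ in the bilinear factor now follows from the definition of $P_h$ as the $H^s$-projection with respect to the $\gamma$-bilinear form stated in Section~\ref{section7}. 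This produces the counterpart of~\eqref{Eq6.3}, $J_1+J_2\le J_3+J_4$, where $J_2$ is the $\gamma$-weighted seminorm of $\tau\sum_n(P_h\bar w^n - W_n)$.

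The right-hand side is handled as before, and here it is crucial that the nonlinearity estimates \cite[Lemma~4.2, Lemma~4.3, Lemma~4.4]{Ebmeyer-Liu-2008} concern only the scalar function $\psi$ and the quasi-norm, hence are independent of the kernel and apply without modification. I would split $J_3$ into an absorbable $\delta$-term and a remainder controlled by $\sum_n\int_{I_n}(\psi(w_n)-\psi(w),w_n-w) + \sum_n\int_{I_n}(\psi(w)-\psi(P_hw),w-P_hw)$, and bound $J_4$ by Hölder and Young's inequalities, absorbing $\delta\Vert\tau\sum_n(P_h\bar w^n - W_n)\Vert_2^2$ into $J_2$. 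After lower-bounding $J_1$ and $J_2$ and replacing $P_h\bar w^n$ by $\bar w^n$ in the error factor (introducing the projection-error summand that appears on the right-hand side of the claimed inequality), the quasi-norm equivalence~\eqref{Eq5.1} converts $\int_0^T(\psi(w)-\psi(W),w-W)$ into $\int_0^T\Vert w-W\Vert_{(w,\frac{m+1}{m})}^2$, yielding the stated estimate.

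The only genuinely new point to check, and the step I expect to require the most care, is that the absorption of $J_4$ and the replacement steps remain legitimate in the general setting, since both rely on the nonlocal Poincaré inequality and the $L^2$-continuity estimate \cite[Lemma~4.2, Lemma~4.3]{DuqiangSIAMReview} for the $\gamma$-seminorm rather than for the explicit fractional seminorm. These hold precisely because the kernel comparison~\eqref{case1} sandwiches $\gamma$ between two constant multiples of $|y-x|^{-(n+2s)}$ on $B_\varepsilon(x)$ while the structural conditions~\eqref{Eq8.3} guarantee the requisite positivity and compact interaction support; thus the Du--Gunzburger framework supplies the needed functional inequalities for $\gamma$, and the remainder of the argument is a faithful transcription of the proof of Theorem~\ref{thm:main5.1}.
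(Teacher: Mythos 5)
Your proposal is correct and matches the paper's approach exactly: the paper itself proves Theorem~\ref{thm:main7.6.1} only by remarking that ``the same arguments of Section 3 and Section 4'' carry over, which is precisely the kernel-replacement transcription you carry out. Your additional observation --- that the only kernel-dependent ingredients are the nonlocal Poincar\'e and $L^2$-continuity estimates of \cite{DuqiangSIAMReview}, which hold for $\gamma$ under \eqref{Eq8.3} and \eqref{case1} --- is exactly the point the paper leaves implicit, so your write-up is if anything more complete than the original.
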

\begin{theorem}\label{thm:main7.6.2}
Let $0 < m < 1$, $w_0 \in L^{\infty}(\Omega_s\cup\Omega_c)\cap H^s(\Omega_s\cup\Omega_c)$ and $\mathcal{V}w_0 = 0$ on $\Omega_c$. Then there is a positive constant $c$ independent of $h$ and $\tau$ such that
\begin{equation}
\label{Eq8.15}
\begin{aligned}
\left(\int_0^T\Vert w - W\Vert_{(w,\frac{m+1}{m})}^2 + \displaystyle\sup_{1\leq n\leq N}\left\Vert\int_0^{t_n}(w - W)\right\Vert_{H^s(\Omega_s\cup\Omega_c)}^2\right)^{1/2} \leq c(\tau + h^s).
\end{aligned}
\end{equation}
\end{theorem}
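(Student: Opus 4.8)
The plan is to mirror the proof of Theorem~\ref{thm:main6.1}, replacing the singular kernel $|x-z|^{-(N+2s)}$ by $\gamma(x,y)$ throughout and exploiting the two-sided bound \eqref{case1}. The natural starting point is the a priori estimate of Theorem~\ref{thm:main7.6.1}: it suffices to bound each of the four terms on its right-hand side by $c(\tau^2+h^{2s})$ and then extract a square root.

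For the temporal term $\sum_{n=1}^N\int_{I_n}\Vert w_n-w\Vert_{(w,\frac{m+1}{m})}^2$, I would set $v:=\abs{u}^{\frac{m-1}{2}}u$ and apply \cite[Lemma~4.1, Lemma~4.2]{Ebmeyer-Liu-2008} to obtain $\int_{I_n}\Vert w_n-w\Vert_{(w,\frac{m+1}{m})}^2 \le c\int_{I_n}(\psi(w_n)-\psi(w),w_n-w)\le c\int_{I_n}\int_{\Omega_s\cup\Omega_c}\abs{v_n-v}^2$, followed by the elementary bound $\int_{I_n}\abs{v_n-v}^2\le\tau^2\int_{I_n}\abs{v_t}^2$ exactly as in \eqref{Eq7.2}. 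Since the quasi-norm inequalities are kernel-independent, this step transfers verbatim; invoking the regularity $\partial_tu^{\frac{m+1}{2}}\in L^2(0,T;L^2(\Omega_s\cup\Omega_c))$ then yields the $c\tau^2$ bound.

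For the two spatial projection terms I would combine the $H^s$-approximation properties of $P_h$ (along the lines of Section~4.4 of \cite{Brenner-Scott}) with the nonlocal Poincar\'{e} inequality \cite[Lemma~4.3]{DuqiangSIAMReview}. Here the hypothesis \eqref{case1} is essential: it makes the $\gamma$-weighted seminorm $\int_{\Omega_s\cup\Omega_c}\int_{\Omega_s\cup\Omega_c}\abs{g(y)-g(x)}^2\gamma(x,y)\,dy\,dx$ comparable, up to the constants $\gamma_*,\gamma^*$, to the standard fractional $H^s$-seminorm on $B_\varepsilon(x)$, so that the estimates \eqref{Eq7.3} and \eqref{Eq7.4} carry over with the rate $h^{2s}$. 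The initial term $\Vert\psi(w_0)-\Pi_h\psi(w_0)\Vert_2^2$ is treated as in \eqref{Eq7.5}: since $w_0\in L^{\infty}\cap H^s$ with $\mathcal{V}w_0=0$ on $\Omega_c$ one has $\psi(w_0)\in H^s$, and the interpolation estimate gives $\le ch^{2s}$.

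Combining these bounds, the bracketed quantity in Theorem~\ref{thm:main7.6.1} is $\le c(\tau^2+h^{2s})$; passing to the square root controls both $\int_0^T\Vert w-W\Vert_{(w,\frac{m+1}{m})}^2$ and the $\gamma$-weighted seminorm of $\int_0^T(w-W)$. Finally, replacing $t_N$ by an arbitrary $t_n$ in the argument (as is done in the last corollary of Section~4) and applying the nonlocal Poincar\'{e} inequality once more converts the seminorm of $\int_0^{t_n}(w-W)$ into its full $H^s(\Omega_s\cup\Omega_c)$ norm, which produces the supremum over $n$. \textbf{The main obstacle} is the regularity input: the control of $\partial_tu^{\frac{m+1}{2}}$ and the spatial $H^s$-regularity of $w$ were quoted from \cite{VPFR2017} in the fractional Laplacian case, and in the present generality one must either invoke an analogous regularity theory for the operator $\mathcal{L}$ or argue through \eqref{case1} that these estimates persist for kernels comparable to $|y-x|^{-(n+2s)}$; this is the step that genuinely requires care rather than a routine transcription of Section~4.
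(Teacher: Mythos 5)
Your proposal is correct and follows essentially the same route as the paper: the paper's entire ``proof'' of Theorem~\ref{thm:main7.6.2} is the one-line assertion that ``the same arguments of Section 3 and Section 4 imply the following results,'' and your proposal is precisely a faithful execution of that plan, starting from Theorem~\ref{thm:main7.6.1} and transcribing the bounds \eqref{Eq7.2}--\eqref{Eq7.5} with the kernel $\gamma(x,y)$ in place of $|x-z|^{-(N+2s)}$ via \eqref{case1}. Your closing remark that the regularity inputs quoted from \cite{VPFR2017} (control of $\partial_t u^{\frac{m+1}{2}}$ and spatial $H^s$-regularity of $w$) were established only for the fractional Laplacian, and do not automatically transfer to the general operator $\mathcal{L}$, is in fact a point the paper silently glosses over, so your treatment is if anything more careful than the source.
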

\begin{corollary}\label{thm:main7.6.3}
Under the assumptions in the Theorem \ref{thm:main7.6.1}, there exists a positive constant $c$ independent of $h$ and $\tau$ such that
$$\Vert u - U\Vert_{L^{m+1}(0,T;L^{m+1}(\Omega_s\cup\Omega_c))} \leq c(\tau + h^s),$$
where $U = \psi(W)$.
\end{corollary}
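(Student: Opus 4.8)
The plan is to obtain the corollary by pushing the quasi-norm estimate of Theorem~\ref{thm:main7.6.2} through the nonlinearity $\psi$ so that it becomes a genuine $L^{m+1}$-bound on $u-U$. Recall that $u=\psi(w)$ and $U=\psi(W)$. The first observation is that the chain of equivalences \eqref{Eq5.1} is purely pointwise and algebraic in $w$, $W$ and $\psi$ and does not involve the kernel; it therefore holds verbatim over $\Omega_s\cup\Omega_c$. Taking $v=W$ there and invoking Theorem~\ref{thm:main7.6.2} gives
\[
\int_0^T\Vert u-U\Vert_{(u,m+1)}^2\;\cong\;\int_0^T\Vert w-W\Vert_{(w,\frac{m+1}{m})}^2\;\le\;c(\tau^2+h^{2s}).
\]

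Next I would convert the quasi-norm into the ordinary norm. Since $0<m<1$, the exponent $p:=m+1$ lies in $(1,2)$, and in this range the quasi-norm comparison recalled in Section~\ref{section4} (in the form of \cite[Lemma 4.1, Lemma 4.2]{Ebmeyer-Liu-2008}) yields, pointwise in $t$, a bound $\Vert u-U\Vert_{L^{m+1}}\le c\,\Vert u-U\Vert_{(u,m+1)}$. The comparison constant depends only on the $L^{m+1}$-norms of $u$ and $U$, so the crux is to bound these uniformly in $h$ and $\tau$; this is precisely what the energy estimate supplies. Testing \eqref{Eq8.7} with $\chi_n=W_n$ and summing (the verbatim analogue of the second Lemma of Section~\ref{Section2}, now with the kernel $\gamma$) bounds $\sup_n\Vert W_n\Vert_{L^{(m+1)/m}(\Omega_s\cup\Omega_c)}$, and since $\vert\psi(W)\vert^{m+1}=\vert W\vert^{(m+1)/m}$ this is the same as a uniform bound on $\Vert U\Vert_{L^{m+1}}$; a uniform bound on $\Vert u\Vert_{L^{m+1}}$ comes from the boundedness and regularity of $w$ (via \cite{VPFR2017} and $u_0\in L^\infty$). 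Thus the comparison constant may be chosen independent of $h,\tau$, and it is in securing this uniformity that I expect the only genuine difficulty to lie.

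Squaring the pointwise inequality and integrating then gives $\Vert u-U\Vert_{L^2(0,T;L^{m+1})}^2\le c\int_0^T\Vert u-U\Vert_{(u,m+1)}^2\le c(\tau^2+h^{2s})$, that is, $\Vert u-U\Vert_{L^2(0,T;L^{m+1})}\le c(\tau+h^s)$. Finally, because $m+1<2$ and $(0,T)$ has finite measure, H\"older's inequality in time embeds $L^2(0,T;L^{m+1})$ into $L^{m+1}(0,T;L^{m+1})$, so $\Vert u-U\Vert_{L^{m+1}(0,T;L^{m+1})}\le c(\tau+h^s)$, which is the assertion. The argument is the exact analogue of the proof of \cite[Corollary~6.2]{Ebmeyer-Liu-2008} and of the corresponding corollary in Section~\ref{section6}; once the a priori $L^{(m+1)/m}$ bound on $W$ is in hand, the remaining steps reduce to integrating the pointwise quasi-norm comparison and applying the finite-time H\"older embedding.
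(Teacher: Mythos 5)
Your proposal is correct and follows essentially the same route as the paper, which omits the proof of this corollary and defers to \cite[Corollary 6.2]{Ebmeyer-Liu-2008}: namely, pass from the quasi-norm rate of Theorem \ref{thm:main7.6.2} to $\int_0^T\Vert u-U\Vert_{(u,m+1)}^2$ via the kernel-independent equivalence \eqref{Eq5.1}, convert to the $L^{m+1}$-norm using the quasi-norm comparison for $p=m+1\in(1,2)$ (with constants controlled by the discrete energy bound on $W$ and the a priori bounds on $u$), and finish with H\"older in time. Your explicit attention to the uniformity in $h$ and $\tau$ of the comparison constant is exactly the point that makes the omitted argument go through.
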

\begin{corollary}\label{thm:main7.6.4}
Under the assumptions in the Theorem \ref{thm:main7.6.1}, then
$$
\begin{aligned}
\left(\int_0^T\int_{\Omega_s\cup\Omega_c}|u - U||w - W| \right)^{\frac{1}{2}} + \left\Vert\int_0^t(w - W)\right\Vert_{L^{\infty}(0,T;H^s(\Omega_s\cup\Omega_c))} \leq c(\tau + h^s).
\end{aligned}
$$
\end{corollary}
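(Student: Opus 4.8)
The plan is to treat the two summands on the left-hand side separately, reducing each to quantities already controlled by Theorem \ref{thm:main7.6.2}, following the same scheme as the corresponding Euclidean result in Section \ref{section6}.

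For the first summand I would exploit the monotonicity of $\psi$. Since $\psi(s) = |s|^{\frac{1-m}{m}}s$ is strictly increasing and $u = \psi(w)$, $U = \psi(W)$, the product $(u-U)(w-W) = (\psi(w)-\psi(W))(w-W)$ is pointwise nonnegative, so that $|u-U|\,|w-W| = (\psi(w)-\psi(W))(w-W)$ pointwise on $(\Omega_s\cup\Omega_c)\times(0,T)$. Integrating and invoking the quasi-norm equivalence (the analog of \eqref{Eq5.1} for the symmetric kernel $\gamma$), one obtains
$$\int_0^T\int_{\Omega_s\cup\Omega_c}|u-U|\,|w-W| = \int_0^T(\psi(w)-\psi(W),w-W) \cong \int_0^T\Vert w-W\Vert_{(w,\frac{m+1}{m})}^2.$$
By Theorem \ref{thm:main7.6.2} the right-hand side is bounded by $c(\tau+h^s)^2 \le c(\tau^2+h^{2s})$, and taking square roots yields the desired bound $c(\tau+h^s)$ for the first term.

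For the second summand I would adapt the terminal-time device used for the analogous corollary in Section \ref{section6}. Fix an arbitrary $T_0 \in (0,T]$, say $T_0\in(t_{n-1},t_n]$. Since by \eqref{Eq8.8} the approximation $W$ is constant in $t$ on $(t_{n-1},t_n]$, the last equation of \eqref{Eq8.7} stays valid after replacing $t_M$ by $T_0$ and $W_M$ by $W(T_0)$. Re-running the proofs of Theorem \ref{thm:main7.6.1} and Theorem \ref{thm:main7.6.2} verbatim with $T$ replaced by $T_0$ then produces
$$\int_{\Omega_s\cup\Omega_c}\int_{\Omega_s\cup\Omega_c}\left|\left(\int_0^{T_0}(w-W)\right)(y) - \left(\int_0^{T_0}(w-W)\right)(x)\right|^2\gamma(x,y)\,dy\,dx \le c(\tau^2+h^{2s}),$$
that is, the $H^s$-seminorm of $\int_0^{T_0}(w-W)$ is $O(\tau+h^s)$ with $c$ independent of $T_0$, $h$ and $\tau$.

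Finally, to pass from the seminorm to the full $H^s$-norm, I would apply the nonlocal Poincar\'e inequality $\mathbf{I}$ together with \cite[Lemma 4.2]{DuqiangSIAMReview} to bound $\Vert\int_0^{T_0}(w-W)\Vert_{L^2(\Omega_s\cup\Omega_c)}^2$ by a constant multiple of the seminorm above, whence $\Vert\int_0^{T_0}(w-W)\Vert_{H^s(\Omega_s\cup\Omega_c)} \le c(\tau+h^s)$; taking the supremum over $T_0\in(0,T]$ gives the $L^\infty(0,T;H^s)$ estimate and completes the argument. The main obstacle is to execute the ``restate at an arbitrary $T_0$'' step with constants uniform in $T_0$: one must verify that every ingredient entering Theorems \ref{thm:main7.6.1}--\ref{thm:main7.6.2} (the Poincar\'e inequality, the $H^s$-projection error, and the regularity-based estimates) holds for the general symmetric kernel $\gamma$ satisfying \eqref{case1} on $\Omega_s\cup\Omega_c$, rather than only for the pure fractional kernel $|x-z|^{-(N+2s)}$, and that the projection error still degrades by exactly the factor $h^s$ as in \cite{Ebmeyer-Liu-2008}.
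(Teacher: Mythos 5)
Your proposal is correct and takes essentially the same route as the paper: the paper's own argument (written out for the Euclidean case in Section \ref{section6} and invoked by analogy for the kernel $\gamma$) is exactly your terminal-time device, namely observing that $W$ is constant on $(t_{n-1},t_n]$ so the last equation of \eqref{Eq8.7} holds with $t_M$ replaced by $T_0$ and $W_M$ by $W(T_0)$, and then re-running Theorems \ref{thm:main7.6.1} and \ref{thm:main7.6.2} to bound $\left\Vert\int_0^{T_0}(w-W)\right\Vert_{H^s(\Omega_s\cup\Omega_c)}$ uniformly in $T_0$. Your treatment of the first term via the pointwise identity $|u-U|\,|w-W|=(\psi(w)-\psi(W))(w-W)$ and the quasi-norm equivalence \eqref{Eq5.1}, and your closing remark that the Poincar\'e and projection estimates must be checked for the general kernel $\gamma$, only make explicit what the paper leaves implicit.
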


\end{document}